\pgfplotsset{width=10cm,compat=1.9}
\newtheorem{thm}{Theorem}[section]
\newtheorem{lem}{Lemma}[section]
\newtheorem{conj}{Conjecture}[section]
\newtheorem{exa}{Example}[section]
\newtheorem{dfn}{Definition}[section]
\newcommand{\N}{\mathbb{N}}
\newcommand{\Z}{\mathbb{Z}}
\newcommand{\Q}{\mathbb{Q}}
\newcommand{\R}{\mathbb{R}}
\title{Irrationality Exponents For Even Zeta Constants}
\date{}
\author{N. A. Carella}
\begin{document}
\thispagestyle{empty}
\date{}
\maketitle


\textbf{\textit{Abstract}:} Let $k\geq 1$ be a small fixed  integer. The rational approximations $\left |p/q-\pi^{k} \right |>1/q^{\mu(\pi^k)}$ of the irrational number $\pi^{k}$ are bounded away from zero. A general result for the irrationality exponent $\mu(\pi^k)$ will be proved here. The specific results and numerical data for a few cases $k=2$ and $k=3$ are also presented and explained. The even parameters $2k$ correspond to the even zeta constants $\zeta(2k)$.\let\thefootnote\relax\footnote{ \today \date{} \\
\textit{AMS MSC}: Primary 11J82, Secondary 11J72; 11Y60. \\
\textit{Keywords}: Irrational number; Irrationality exponent; Pi.}

\tableofcontents
\listoftables
\section{Introduction} \label{s6097}
Let $k\geq 1$ be a small fixed  integer. The rational approximations $\left |p/q-\pi^{k} \right |>1/q^{\mu(\pi^k)}$ of the irrational number $\pi^{k}$ are bounded away from zero. The earliest result $\left |p/q-\pi \right |>1/q^{42}$ for the irrationality exponent $\mu(\pi)$ was proved by Mahler in 1953, and more recently it was reduced to $\left |p/q-\pi \right |>1/q^{7.6063}$ by Salikhov in 2008. The earliest result for next number $\left |p/q-\pi^2 \right |>1/q^{11.86}$ was proved by Apery in 1979, and more recently it was reduced to $\left |p/q-\pi^2 \right |>1/q^{7.398537}$ by Rhin and Viola in 1996. Therer is no literature for $k \geq 3$. This note introduces elementary techniques to determine the irrationality exponent $\mu(\pi^k)$ of the irrational number $\pi^k$. It is shown that the Diophantine inequality $\left |p/q-\pi^{k} \right |>1/q^{2+\varepsilon}$, where $\varepsilon>0$ is an arbitrarily small number, is true for any $k\geq1$.

\subsection{Exponent For The Number $\pi^2$}

Let $\{p_n/q_n: n \geq 1\}$ be the sequence of convergents of the irrational number $\pi^2$. The sequence of rational approximations $\{\left |p_n/q_n-\pi^2 \right |: n \geq 1\}$ are bounded away from zero. For instance, the $5$th and $6$th convergents are 
\begin{multicols}{2}
 \begin{enumerate} [font=\normalfont, label=(\roman*)]
\item$ \displaystyle 
 \left |\frac{227}{23}-\pi^2 \right |\geq \frac{1}{23^{3.236253}},
$
\item$\displaystyle
\left |\frac{10748}{1089}-\pi^2\right | \geq \frac{1}{1089^{2.018434}} 
$,
\end{enumerate}
\end{multicols}
respectively, additional data are compiled in Table \ref{t6092.05}. But, it is difficult to prove a lower bound. The earliest result $\left |p/q-\pi^2 \right |\geq 1/q^{11.85}$ was proved by Apery in \cite{AR79}, and more recently it was improved to $\left |p/q-\pi^2 \right |\geq 1/q^{5.44}$ by Rhin and Viola in \cite{RV01}. Basic and elementary ideas are used here to improve it to the followings estimate.
\begin{thm} \label{thm6097.21} For all large rational approximations $p/q \to \pi^2$, the Diophantine inequality
\begin{equation} \label{eq6097.46}
  \left | \pi^2-\frac{p}{q} \right | \geq\frac{1}{q^{2+\varepsilon }},
\end{equation}	
where $\varepsilon>0$ is a small number, is true.
\end{thm}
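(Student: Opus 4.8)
Since $\pi^2 = 6\,\zeta(2)$ and the irrationality exponent is unchanged under multiplication by a fixed nonzero rational, it suffices to prove \eqref{eq6097.46} with $\pi^2$ replaced by $\zeta(2)$. The plan is to exhibit an explicit sequence of rational approximations $p_n/q_n \to \zeta(2)$ that is at once \emph{dense on the logarithmic scale}, meaning $q_{n+1} \le q_n^{1+o(1)}$, and \emph{of essentially optimal quality}, meaning $0 < |q_n\zeta(2) - p_n| \le q_n^{-\delta}$ with $\delta$ allowed to approach $1$. A standard transference argument then turns such a sequence into the measure bound: given an arbitrary rational $p/q$ with $q$ large, pick the least index $n$ with $q_n \ge (2q)^{1/\delta}$, so that $q\,|q_n\zeta(2) - p_n| \le q\,q_n^{-\delta} < 1/2$; if $|pq_n - p_nq| \ge 1$ the triangle inequality applied to $|\zeta(2) - p/q|$ and $|\zeta(2) - p_n/q_n|$ forces $|\zeta(2) - p/q| \ge (2qq_n)^{-1}$, and the minimality of $n$ together with the density hypothesis gives $q_n \le q_{n-1}^{1+o(1)} < (2q)^{(1+o(1))/\delta}$; the degenerate case $pq_n = p_nq$ is disposed of by passing to the index $n+1$. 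The upshot is $\mu(\zeta(2)) \le 1 + 1/\delta$, and letting $\delta \to 1$ yields precisely \eqref{eq6097.46}.

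\textbf{The approximations.} For the sequence I would use the Hermite--Beukers double integral
\[
  I_n \;=\; \int_0^1\!\!\int_0^1 \frac{\tilde P_n(x)\,\tilde P_n(y)}{1-xy}\,dx\,dy,
\]
where $\tilde P_n$ is the $n$-th shifted Legendre polynomial. A partial-fraction expansion of the integrand exhibits $I_n = a_n\,\zeta(2) - b_n$ with $a_n \in \Z$ and $b_n \in \Q$; the integral is positive but tends to $0$, so $0 < |I_n|$ for every $n$. Laplace's method applied to $I_n$, together with the prime number theorem in the form $\lcm(1,\dots,n) = e^{n(1+o(1))}$, supplies the exponential orders of magnitude of $a_n$, of $b_n$, and of $I_n$; a short computation then shows that the cleared integer denominators $q_n$ satisfy $\log q_{n+1} = (1+o(1))\log q_n$, so the density requirement holds automatically. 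Up to this point everything is classical --- this is the mechanism behind the measures of Apery and of Rhin and Viola cited above.

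\textbf{The main obstacle.} The whole difficulty is arithmetic. As written, $b_n$ is only a rational with denominator dividing $\lcm(1,\dots,n)^2$, so the honest integer denominator is $q_n \approx \lcm(1,\dots,n)^2\,|a_n| = e^{(2+o(1))n}\,|a_n|$, and with this $q_n$ the raw estimates yield only $|q_n\zeta(2) - p_n| \le q_n^{-\delta}$ with $\delta$ near $0.2$ --- exactly the regime that produces the published values $11.85$ and $5.44$, far from the $\delta \to 1$ that \eqref{eq6097.46} requires. To reach Theorem~\ref{thm6097.21} one must show that for a suitable variant of these forms the $\lcm$-factors are essentially spurious: that, after changing the polynomials, inserting rational free parameters and optimizing in the style of Rhin and Viola, or running a contiguity/group-action argument on the underlying hypergeometric data, the linear forms become integral up to a denominator of subexponential size, which would force $q_n \approx |a_n|$ and hence the near-optimal quality $|q_n\zeta(2) - p_n| \approx q_n^{-1+o(1)}$. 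This reduction is the step I expect to carry the real weight of the argument; no presently published construction removes enough of the $\lcm$ to bring $\delta$ anywhere near $1$, so any proof of \eqref{eq6097.46} must either achieve it or replace the approximation scheme by a fundamentally different device.
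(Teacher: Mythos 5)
Your framework is sound as far as it goes: the reduction from $\pi^2$ to $\zeta(2)$, the transference argument turning a dense sequence of approximations of quality $\delta$ into the bound $\mu(\zeta(2))\le 1+1/\delta$, and the description of the Beukers-type integrals $I_n=a_n\zeta(2)-b_n$ with their asymptotics are all standard and correctly stated. But the proposal does not prove Theorem~\ref{thm6097.21}, and you say so yourself: after clearing the factor $\lcm(1,\dots,n)^2$ the construction only delivers a quality $\delta$ far below $1$ (exactly the regime of the bounds $11.85$ and $5.44$ in Table~\ref{t6097.01}), and the decisive step --- showing that for some variant of these linear forms the $\lcm$-denominators are ``essentially spurious,'' so that $q_n\approx|a_n|$ and $|q_n\zeta(2)-p_n|\approx q_n^{-1+o(1)}$ --- is left entirely unproved, with no candidate mechanism beyond naming the Rhin--Viola group action and contiguity relations, which are precisely the tools that are known \emph{not} to reach $\delta\to 1$. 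That missing step is not a technical lacuna; it is a restatement of the open problem $\mu(\zeta(2))=2$ itself, so the argument has a genuine gap at the one point that carries all the weight.

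For the record, the paper's own argument takes a completely different route and never touches linear forms in zeta values: it works with the convergents $p_n/q_n$ of $\pi^2$, uses Dirichlet-kernel identities evaluated at a specially chosen shift $x_n=\bigl(\tfrac{2^{2+2v_2}+1}{2^{2+2v_2}}\bigr)\tfrac{q_n}{\pi^{2}}$ to assert the bound $1/\left|\sin\left(\pi^{3}q_n\right)\right|\ll q_n^{1+\varepsilon}$, and then unwinds $\sin\left(\pi^{3}q_n\right)=\pm\sin\left(\pi\left(\pi^{2}q_n-p_n\right)\right)$ to conclude $\left|\pi^{2}q_n-p_n\right|\gg q_n^{-1-\varepsilon}$. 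Be aware, however, that this pivotal sine bound is essentially equivalent to a lower bound on the distance from $\pi^{2}q_n$ to the nearest integer, i.e.\ to the conclusion being proved, so it does not offer you a ready-made replacement for your missing construction; if you pursue your linear-forms route, the burden of producing approximations with $\delta\to 1$ (or an entirely different device) remains yours to discharge.
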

The proof appears in Section \ref{s6092}.\\

\begin{table}[h!]

\centering
\caption{Historical Data For $\mu(\pi^2)$} \label{t6097.01}
\begin{tabular}{l|l|l}
Irrationality Measure Upper Bound&Reference&Year\\
\hline
 $\mu(\pi^2) \leq 11.85078$ & Apery, \cite{MK53}&1976\\
 $\mu(\pi^2) \leq 10.02979$ & Dvornicich, Viola, \cite{DV87}&1987\\
 $\mu(\pi^2) \leq 5.441243$ & Rhin, Viola, \cite{RV01}&2001\\
\end{tabular}
\end{table}

\subsection{Exponent For The Number $\pi^3$}
Let $\{p_n/q_n: n \geq 1\}$ be the sequence of convergents of the irrational number $\pi^3$. The sequence of rational approximations $\{\left |p_n/q_n-\pi^3 \right |: n \geq 1\}$ are bounded away from zero. For instance, the $5$th and $6$th convergents are 
\begin{multicols}{2}
 \begin{enumerate} [font=\normalfont, label=(\roman*)]
\item$ \displaystyle 
\left |\frac{123498}{3983}-\pi^3 \right |\geq \frac{1}{3983^{2.320380}},
$
\item$\displaystyle
\left |\frac{1714151}{55284}-\pi^3\right | \geq \frac{1}{55284^{2.096515}} 
$,
\end{enumerate}
\end{multicols}
respectively, additional data are compiled Table \ref{t6094.05}. But, it is difficult to prove a lower bound. The literature does not have any estimate nor numerical data on the irrationality exponent of this number. Basic and elementary ideas are used here to prove the followings estimate.
\begin{thm} \label{thm6097.51} For all large rational approximations $p/q \to \pi^3$, the Diophantine inequality
\begin{equation} \label{eq6097.46}
  \left | \pi^3-\frac{p}{q} \right | \geq\frac{1}{q^{2+\varepsilon }},
\end{equation}	
where $\varepsilon>0$ is a small number, is true.
\end{thm}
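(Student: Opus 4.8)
The plan is to run, for $\pi^3$, the analogue of the argument behind Theorem~\ref{thm6097.21}, with the role of $\zeta(2)=\pi^2/6$ taken over by the Dirichlet beta value
\[
\beta(3)\;=\;\sum_{n\ge0}\frac{(-1)^n}{(2n+1)^3}\;=\;\frac{\pi^{3}}{32},
\]
so that $\pi^3=32\,\beta(3)$ and any integer pair $(p,q)$ with $p/q$ close to $\pi^3$ is, up to the fixed rational factor $32$, an equally good rational approximation $p/(32q)$ of $\beta(3)$. (One could instead try to exploit $\pi^{6}=(\pi^{2})^{3}$ together with Theorem~\ref{thm6097.21}, but replacing $p/q$ by $p^{3}/q^{3}$ inflates the denominator to $q^{6}$ and only bounds $\mu(\pi^{6})$ from below, which is the wrong direction, so I would not pursue that route.) The first concrete task is to fix an auxiliary sequence of rational approximations $a_n/b_n\to\beta(3)$, coming from a hypergeometric / Beukers-type construction --- a three-term recurrence together with an integral representation of the associated linear forms --- and to extract its two quantitative features: a denominator bound $b_n\le e^{c_1 n}$, valid once one clears the $\lcm(1,3,5,\dots,2n+1)$ forced by the terms $(-1)^{j}(2j+1)^{-3}$, and an exponential smallness estimate $0<\lvert b_n\beta(3)-a_n\rvert\le e^{-c_2 n}$ for the linear forms themselves.

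The Diophantine step is then short. Assume \eqref{eq6097.46} fails, so that for arbitrarily large $q$ there is an integer $p$ with $\lvert q\pi^3-p\rvert<q^{-1-\varepsilon}$. For such a $q$ choose the index $n=n(q)$ with $b_n\asymp q$ (possible because $b_{n+1}/b_n$ stays bounded) and form the integer
\[
\Lambda_n\;=\;b_n\,p-32\,q\,a_n\;=\;b_n\bigl(p-32q\,\beta(3)\bigr)+32\,q\bigl(b_n\beta(3)-a_n\bigr).
\]
Since $a_n/b_n\to\beta(3)$ is irrational while $p/(32q)$ is a fixed rational, $\Lambda_n\neq0$ for all large $n$, hence $\lvert\Lambda_n\rvert\ge1$. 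On the other hand $\lvert b_n(p-32q\beta(3))\rvert=b_n\lvert q\pi^3-p\rvert\ll q\cdot q^{-1-\varepsilon}=q^{-\varepsilon}$, while, using $b_n\asymp q$ and hence $n\gg\log q$, one gets $\lvert 32q(b_n\beta(3)-a_n)\rvert\le 32q\,e^{-c_2 n}\ll q^{\,1-c_2/c_1+o(1)}$. Provided the auxiliary construction can be arranged with $c_2>c_1$ (with a little room to spare), both contributions tend to $0$ as $q\to\infty$, contradicting $\lvert\Lambda_n\rvert\ge1$; letting $\varepsilon\downarrow0$ then yields $\mu(\pi^3)=2$, which is precisely \eqref{eq6097.46}.

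Thus everything reduces to a single inequality: the approximations $a_n/b_n$ to $\beta(3)$ must be \emph{super-Dirichlet}, $\lvert b_n\beta(3)-a_n\rvert=o(b_n^{-1})$ uniformly in $n$, equivalently $c_2>c_1$. This is the main obstacle, and it is arithmetic in nature: it demands a sharp comparison between the geometric decay rate of the Beukers-type integrals and the size $\lcm(1,3,\dots,2n+1)\asymp e^{(2+o(1))n}$ of the denominators that have to be cleared to make the $a_n$ integral --- a comparison in which the $\lcm$ contribution is decisive, and which is exactly what makes estimates of this type hard to obtain. The remaining issues --- checking that clearing denominators does not make $\Lambda_n$ vanish, and that the passage $p/q\leftrightarrow p/(32q)$ between $\pi^3$ and $\beta(3)$ costs only the harmless factor $32$ --- are routine, and are dispatched exactly as in Section~\ref{s6092}.
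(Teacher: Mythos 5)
Your route is genuinely different from the paper's: the paper proves this statement in Section~\ref{s6094} by combining the best-approximation reduction of Lemma~\ref{lem5534.505} with the Dirichlet-kernel bound $\left|1/\sin\left(\pi^{4}q_n\right)\right|\ll q_n^{1+\varepsilon}$ of Theorem~\ref{thm7734.304}, and then unwinding $\left|\sin\left(\pi^{4}q_n-\pi p_n\right)\right|\ll\left|\pi^{3}q_n-p_n\right|$; you instead attempt the classical linear-forms argument through $\beta(3)=\pi^{3}/32$. The problem is that your argument is not a proof but a reduction of the theorem to an unproved hypothesis that carries its entire content. Everything hinges on the existence of integers $a_n,b_n$ with $b_n\le e^{c_1 n}$, $0<\left|b_n\beta(3)-a_n\right|\le e^{-c_2 n}$ and $c_2>c_1$, i.e.\ on a sequence of rational approximations of quality $\left|\beta(3)-a_n/b_n\right|\ll b_n^{-1-c_2/c_1}$ with exponent strictly greater than $2$. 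Producing such forms is exactly the statement $\mu(\pi^{3})=2$ in disguise (compare Lemma~\ref{lem2000.05}: approximations of that quality are essentially the convergents themselves), so the "single inequality" you defer is not a technical loose end but the theorem. You acknowledge this, which is honest, but it means the proposal establishes nothing beyond the trivial lower bound $\mu(\pi^{3})\ge 2$.

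Moreover, the needed inequality $c_2>c_1$ is far beyond what any known Beukers/Ap\'ery-type construction delivers: after clearing $\lcm(1,3,\dots,2n+1)\asymp e^{2n(1+o(1))}$ the arithmetic loss swamps the geometric decay of the integrals, which is precisely why the literature records only $\mu(\zeta(3))\le 5.513891$ and $\mu(\pi^{2})\le 5.441243$ (Tables~\ref{t6093.001} and \ref{t6097.01}) and nothing at all for $\pi^{3}$; no construction for $\beta(3)$ with $c_2>c_1$, or even within sight of it, exists. Two smaller points: the nonvanishing of $\Lambda_n=b_np-32qa_n$ is not automatic for the single index $n=n(q)$ you select, since $(p,q)$ varies with $q$ (if $\Lambda_n=0$ no contradiction arises from your estimates); the standard repair uses two consecutive linearly independent forms, which your sketch should state as a required property of the construction. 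And the choice of $n$ with $b_n\asymp q$ needs $b_{n+1}\ll b_n^{1+o(1)}$, again a property of a construction you have not exhibited. As it stands, the argument would "prove" measure $2$ for any irrational number for which one merely postulates super-Dirichlet approximations, so it cannot be accepted as a proof of the stated inequality.
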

The proof appears in Section \ref{s6094}.

\subsection{Exponent For The General Case $\pi^k$}
\begin{thm} \label{thm6010.02} Let $k\geq 1$ be a small fixed integer. For all large rational approximations $p/q \to \pi^k$, the Diophantine inequality
\begin{equation} \label{eq6010.06}
  \left | \pi^k-\frac{p}{q} \right | \gg\frac{1}{q^{2+\varepsilon }},
\end{equation}	
where $\varepsilon>0$ is a small number, is true.
\end{thm}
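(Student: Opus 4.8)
The plan is to treat the even powers and the odd powers separately, running in each case the machinery already used to prove Theorems~\ref{thm6097.21} and~\ref{thm6097.51}. For an even exponent $2k$ I would first invoke the classical evaluation $\zeta(2k)=c_{k}\,\pi^{2k}$ with $c_{k}\in\Q^{\times}$ depending only on $k$; since $c_k$ is a fixed rational, a rational approximation $p/q\to\pi^{2k}$ and the associated approximation $c_k\,p/q\to\zeta(2k)$ have denominators of the same order, so $\mu(\pi^{2k})=\mu(\zeta(2k))$ and it suffices to bound the irrationality exponent of the zeta value. For an odd exponent $2k+1$ I would write $\pi^{2k+1}=\pi\cdot\pi^{2k}$ and, given convergents $a/b\to\pi$ and $c/d\to\pi^{2k}$ chosen at a common scale, take $ac/bd$ as an approximation of $\pi^{2k+1}$; the bound
\begin{equation*}
  \left|\pi^{2k+1}-\frac{ac}{bd}\right|\;\le\;\pi\left|\pi^{2k}-\frac{c}{d}\right|+\frac{c}{d}\left|\pi-\frac{a}{b}\right|
\end{equation*}
then transfers the exponent from the two factors to the product.

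With this reduction in place, the core of the argument is the construction, for every large $n$, of an explicit pair of integers $(A_n,B_n)$ with $B_n\pi^{k}-A_n\neq 0$ for which $\log B_n$ and $\log\left|B_n\pi^{k}-A_n\right|^{-1}$ grow at comparable rates, so that the quotient of the two tends to $1$. For even $k$ these would come from truncations of the Euler series for $\zeta(k)$: the $N$-th partial sum has denominator dividing $\lcm(1,2,\dots,N)^{k}$, whose logarithm is $\sim kN$ by the prime number theorem, while the tail $\sum_{n>N}n^{-k}$ has size of order $N^{1-k}$. Feeding such a sequence into the elementary criterion of Sections~\ref{s6092}--\ref{s6094} that links sequences of small linear forms to the irrationality exponent yields $\mu(\pi^{k})\le 2+\varepsilon$; one then rephrases this as~\eqref{eq6010.06} by noting that a convergent $p/q$ violating~\eqref{eq6010.06} would produce a linear form in $\pi^{k}$ strictly better than the extremal one just built, a contradiction.

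The step I expect to be the main obstacle is making the estimates \emph{uniform in $k$}. The competing quantities above behave like $kN$ (log-denominator) and $(k-1)\log N$ (log-reciprocal-tail), so the quotient $\log(\text{error})^{-1}/\log(\text{denominator})$ threatens to drift away from $1$ as $k$ grows; pinning it near $1$ is exactly where the hypothesis that $k$ is a \emph{small fixed} integer is spent, and it governs how the truncation length $N$ must be chosen as a function of the target $q$. A secondary difficulty is the odd case: the bound for $\pi^{2k+1}$ coming from the product decomposition is only as strong as the weaker of the inputs for $\pi$ and $\pi^{2k}$, so the two approximation sequences must be synchronised --- their errors balanced --- before multiplying, and one must check that this synchronisation is simultaneously possible for all the relevant $k$.
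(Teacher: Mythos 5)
Your proposal takes a completely different route from the paper, and the route does not work. The paper proves Theorem \ref{thm6010.02} by working only along the convergents $p_n/q_n$ of $\pi^k$ and bounding the reciprocal sine function there: Lemma \ref{lem5534.505} reduces the problem to $z=q_n$, the Dirichlet-kernel construction of Section \ref{s6699} (Theorem \ref{thm6699.300}) is invoked to get $\left|1/\sin(\pi^{k+1}q_n)\right|\ll q_n^{1+\varepsilon}$, and the chain \eqref{eq7729.885} converts this into $\left|\pi^k q_n-p_n\right|\gg q_n^{-(1+\varepsilon)}$, i.e.\ the exponent $2+\varepsilon$, uniformly in the parity of $k$. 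In particular, there is no ``elementary criterion linking sequences of small linear forms to the irrationality exponent'' in Sections \ref{s6092}--\ref{s6094} for you to feed a construction into; those sections run the same sine-kernel argument for $k=2,3$, so the reduction you lean on has no counterpart in the paper.

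The decisive gap is in your central construction. Truncating $\zeta(k)=\sum_{n\ge1}n^{-k}$ at $N$ gives $A_N/B_N$ with $B_N\mid\lcm(1,\dots,N)^k$, so $\log B_N\sim kN$, while the tail is of order $N^{1-k}$; hence $\left|B_N\zeta(k)-A_N\right|\asymp e^{kN(1+o(1))}N^{1-k}\to\infty$, and the quality ratio $\log\bigl(\text{error}^{-1}\bigr)/\log B_N\asymp (k-1)\log N/(kN)$ tends to $0$, not to $1$, for every fixed $k\ge2$. No choice of $N$ as a function of $q$ repairs this: such forms yield no bound on $\mu(\zeta(k))$ whatsoever (they do not even prove irrationality), which is precisely why the literature cited in the paper (Beukers, Rhin--Viola, Hata) builds exponentially small linear forms from integral constructions. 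So the difficulty is not ``uniformity in $k$'' as you suggest; the construction already fails at $k=2$. The odd case has a second, independent flaw: exhibiting particular good approximations $ac/bd$ of $\pi^{2k+1}=\pi\cdot\pi^{2k}$ gives no upper bound on $\mu(\pi^{2k+1})$, because an exponent bound must rule out \emph{all} rationals approximating too well, and your closing step (a convergent violating \eqref{eq6010.06} would ``beat the extremal form, a contradiction'') is only a contradiction if the constructed sequence of forms is small and suitably dense in scale --- exactly the property the Euler truncations lack. As it stands, the proposal neither reproduces the paper's argument nor provides an independent proof.
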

The proof appears in Section \ref{s7729}.\\

\section{Harmonic Summation Kernels}\label{s5534}
The harmonic summation kernels naturally arise in the partial sums of Fourier series and in the studies of convergences of continuous functions.

\begin{dfn} \label{dfn6634.100} The Dirichlet kernel is defined by
\begin{equation} \label{eq5534.200}
\mathcal{D}_x(z)=\sum_{-x\leq n \leq x} e^{i 2nz}=\frac{\sin((2x+1)z)}{ \sin \left ( z \right )},
\end{equation} 
where $x\in \N$ is an integer and $ z \in \R-\pi\Z$ is a real number. 
\end{dfn}

\begin{dfn} \label{dfn6634.102} The Fejer kernel is defined by
\begin{equation} \label{eq5534.204}
\mathcal{F}_x(z)=\sum_{0\leq n \leq x,} \sum_{-n\leq k \leq n} e^{i 2kz}=\frac{1}{2}\frac{\sin((x+1)z)^2}{ \sin \left ( z \right )^2},
\end{equation} 
where $x\in \N$ is an integer and $ z \in \R-\pi\Z$ is a real number. 
\end{dfn}

These formulas are well known, see \cite{KT89} and similar references. For $z \ne k \pi$, the harmonic summation kernels have the upper bounds $\left |\mathcal{K}_x(z) \right |=\left |\mathcal{D}_x(z) \right | \ll |x|$, and $\left |\mathcal{K}_x(z) \right |=\left |\mathcal{F}_x(z) \right | \ll |x^2|$. \\

The Dirichlet kernel in Definition \ref{dfn6634.100} is a well defined continued function of two variables $x,z \in \R$. Hence, for fixed $z$, it has an analytic continuation to all the real numbers $x \in \R$. \\

An important property is the that a proper choice of the parameter $x\geq1$ can shifts the sporadic large value of the reciprocal sine function $1/\sin z$ to $\mathcal{K}_x(z)$, and the term $1/\sin(2x+1)z$ remains bounded. This principle will be applied to certain lacunary sequences $\{q_n : n \geq 1\}$, which maximize the reciprocal sine function $1/\sin z$, to obtain an effective upper bound of the function $1/\sin z$.\\

\begin{lem}\label{lem5534.505} Let $k\geq 1$ be a small fixed integer, and let $\{p_n/q_n : n \geq 1\}$ be the sequence of convergents of the real number $\pi^{k}$, and $0\ne z \in \Z$. Then
\begin{equation} \label{eq5534.520}
\frac{1}{\left |\sin(\pi^{k+1}z)\right |}\ll \frac{1}{\left |\sin\left (\pi^{k+1}q_n\right )\right |}.
\end{equation} 
\end{lem}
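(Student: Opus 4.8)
The plan is to translate the claimed inequality into a statement about how well the integer multiples $\pi^{k}z$ approximate integers, and then to feed it into the best‑approximation property of the continued fraction expansion of $\pi^{k}$. Since $\pi$ is transcendental, $\pi^{k}$ is irrational, so the convergents $p_n/q_n$ are well defined with $q_n\uparrow\infty$, and the elementary two–sided estimate
\[
2\Vert t\Vert\;\leq\;|\sin(\pi t)|\;\leq\;\pi\Vert t\Vert
\]
holds for every real $t$, where $\Vert t\Vert$ is the distance from $t$ to the nearest integer (the lower bound follows from concavity of $\sin$ on $[0,\pi]$, the upper bound from $|\sin x|\leq|x|$). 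Writing $\pi^{k+1}z=\pi\cdot(\pi^{k}z)$, controlling $1/|\sin(\pi^{k+1}z)|$ from above amounts to controlling $1/\Vert\pi^{k}z\Vert$ from above, and similarly $1/|\sin(\pi^{k+1}q_n)|$ corresponds to $1/\Vert\pi^{k}q_n\Vert$.

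First I would note that the index $n$ in the statement is to be matched to $z$: given $z\neq 0$, replace $z$ by $|z|$ (legitimate since $|\sin(\pi^{k+1}z)|=|\sin(\pi^{k+1}|z|)|$) and take the $n$ with $q_n\leq |z|<q_{n+1}$; for $|z|$ large such an $n$ exists, and the initial range $|z|<q_1$ is subsumed by the ``large $p/q$'' hypothesis of the theorems that invoke this lemma. The crux is then the classical fact that convergent denominators are best approximations of the second kind: for every integer $z$ with $1\leq z<q_{n+1}$ one has $\Vert\pi^{k}z\Vert\geq\Vert\pi^{k}q_n\Vert$. Combining this with the displayed sine estimates gives
\[
\frac{1}{|\sin(\pi^{k+1}z)|}\;\leq\;\frac{1}{2\Vert\pi^{k}z\Vert}\;\leq\;\frac{1}{2\Vert\pi^{k}q_n\Vert}\;\leq\;\frac{\pi}{2}\cdot\frac{1}{|\sin(\pi^{k+1}q_n)|},
\]
which is exactly \eqref{eq5534.520} with an absolute implied constant (one may take it to be $\pi/2<2$).

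To keep the argument within the language of this section, the same computation can be repackaged through the Dirichlet kernel of Definition \ref{dfn6634.100}: using its analytic continuation to non‑integer order, set $2x+1=q_n/z$, so that $\mathcal{D}_x(\pi^{k+1}z)=\sin(\pi^{k+1}q_n)/\sin(\pi^{k+1}z)$; in the bracket $q_n\leq z<q_{n+1}$ one has $|x|<1$, and the needed bound $|\mathcal{D}_x(\pi^{k+1}z)|\ll 1$ is again nothing but the inequality $\Vert\pi^{k}z\Vert\geq\Vert\pi^{k}q_n\Vert$, now together with the near‑integrality $|\pi^{k}q_n-p_n|<1/q_{n+1}$ which places both the numerator and the denominator of $\mathcal{D}_x$ within $O(1/q_{n+1})$ of a multiple of $\pi$. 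Either route uses the same essential input.

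The step I expect to be the main obstacle is precisely the correct application of the best‑approximation property: one must pair $z$ with the right index $n$, treat the small‑$z$ range separately or absorb it into the ``large approximation'' hypothesis, and — if the kernel formulation is preferred — justify that the analytically continued kernel $\mathcal{D}_x$ of non‑integer order is still $O(1)$ on the particular arguments $\pi^{k+1}z$ that occur, which is valid only because those arguments lie within $O(1/q_{n+1})$ of an integer multiple of $\pi$. Everything else reduces to the routine pair of sine inequalities above.
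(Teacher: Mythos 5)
Your proposal is correct and follows essentially the same route as the paper: the heart of both arguments is the best-approximation property of the convergents of $\pi^{k}$ (Lemma \ref{lem2000.07}), which forces $\Vert\pi^{k}z\Vert\geq\Vert\pi^{k}q_n\Vert$ for $z$ in the admissible range, translated into the sine inequality by elementary estimates. The only cosmetic differences are that the paper compares the two sine values directly via monotonicity of $\sin$ on $[0,\pi/2]$ (constant $1$) where you use the two-sided bound $2\Vert t\Vert\leq|\sin \pi t|\leq\pi\Vert t\Vert$, and that you make the pairing $q_n\leq|z|<q_{n+1}$ explicit, which the paper leaves implicit by assuming $z\leq q_n$.
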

\begin{proof} By the best approximation principle, see Lemma \ref{lem2000.07}, 
\begin{equation} \label{eq5534.522}
\left | m-\pi^{k} z\right |\geq \left | p_n-\pi^{k} q_n\right |
\end{equation} 
for any integer $z \leq q_n$. Hence, 
\begin{eqnarray} \label{eq5534.573}
\frac{1}{\left | \sin\left ( \pi^{k+1} z\right) \right |}
&=&\frac{1}{\left |\sin\left ( \pi m-\pi^{k+1} z\right)\right |} \\
&\leq&\frac{1}{\left |\sin\left ( \pi p_n-\pi^{k+1} q_n\right)\right |} \nonumber\\
&=& \frac{1}{\left |\sin\left ( \pi^{k+1} q_n \right)\right | }\nonumber,
\end{eqnarray}
as $n \to \infty$. 
\end{proof}

\section{Upper Bound For $\left | 1/\sin \pi^{k+1} z \right |$} \label{s6699}
As shown in Lemma \ref{lem5534.505}, to estimate the upper bound of the function $1/|\sin \pi^{k+1} z|$ over the real numbers $z \in\R$, it is sufficient to fix $z=q_n$, and select a real number $x \in \R$ such that $q_n \asymp x$. This idea is demonstrated below for small integer parameter $k\geq 1$.

\begin{lem}\label{lem6699.705} Let $k\geq 1$ be a small fixed integer, let $\{p_n/q_n : n \geq 1\}$ be the sequence of convergents of the real number $\pi^{k}$, and define the associated sequence
\begin{equation} \label{eq6699.702} 
 x_n=\left (\frac{2^{2+2v_2}+1}{2^{2+2v_2}}\right )\frac{q_n}{\pi^{k}},
\end{equation} 
where $v_2=v_2(q_n)=\max\{v:2^v\mid q_n\}$ is the $2$-adic valuation, and $n \geq 1$. Then
 \begin{enumerate} [font=\normalfont, label=(\roman*)]
\item$\displaystyle  \sin\left (2 (x_n-1/2)+1)\pi^{k+1}q_n\right ) =\pm 1$.
\item$\displaystyle  \sin\left (2 (x_n+1/2)+1)\pi^{k+1}q_n\right ) =\pm \cos 2\pi^{k+1}q_n$.
\item$\displaystyle  \left | \sin\left (2 x_n+1/2)\pi^{k+1}q_n\right ) \right |\geq 1 -\frac{2\pi^2}{q_n^2},$  \tabto{7cm} as $n \to \infty$.
\end{enumerate}
\end{lem}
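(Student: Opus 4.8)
The plan is to reduce all three parts to a single computation, namely the value of $2x_n\pi^{k+1}q_n$ modulo $2\pi$. Substituting \eqref{eq6699.702} into this product, the factor $\pi^k$ cancels, and since $(2^{2+2v_2}+1)/2^{2+2v_2}=1+2^{-(2+2v_2)}$ one gets
\[
2x_n\pi^{k+1}q_n=2\pi q_n^2+\frac{2\pi q_n^2}{2^{2+2v_2}}.
\]
Writing $q_n=2^{v_2}m$ with $m$ odd, so that $q_n^2=2^{2v_2}m^2$, the second term is exactly $\tfrac{\pi}{2}m^2$, hence $2x_n\pi^{k+1}q_n=2\pi q_n^2+\tfrac{\pi}{2}m^2\equiv\tfrac{\pi}{2}m^2\pmod{2\pi}$. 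This is the sole role of the constant in \eqref{eq6699.702}: it forces the product onto an odd multiple of $\pi/2$ modulo $2\pi$, so that $\sin(\tfrac{\pi}{2}m^2)=\pm1$ and $\cos(\tfrac{\pi}{2}m^2)=0$. I would open the proof with this step.

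Parts (i) and (ii) then follow at once. Since $2(x_n-\tfrac12)+1=2x_n$, the argument in (i) is precisely $2x_n\pi^{k+1}q_n$, whose sine is $\sin(\tfrac{\pi}{2}m^2)=\pm1$. Since $2(x_n+\tfrac12)+1=2x_n+2$, the argument in (ii) is $2x_n\pi^{k+1}q_n+2\pi^{k+1}q_n$; expanding $\sin(\tfrac{\pi}{2}m^2+2\pi^{k+1}q_n)$ by the addition formula and using $\cos(\tfrac{\pi}{2}m^2)=0$ gives $\pm\cos(2\pi^{k+1}q_n)$, as claimed.

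For (iii) the relevant quantity is the central Dirichlet argument $(2x_n+1)\pi^{k+1}q_n=2x_n\pi^{k+1}q_n+\pi^{k+1}q_n$ (the kernel at $x=x_n$, interpolating the two cases above), whose sine equals $\pm\cos(\pi^{k+1}q_n)$ by the same reduction. To bound this below I would use the best-approximation property of convergents (Lemma~\ref{lem2000.07}): with $\theta_n:=\pi^kq_n-p_n$ one has $|\theta_n|\le 1/q_{n+1}<1/q_n$. As $p_n\in\Z$, $\cos(\pi^{k+1}q_n)=\cos(\pi p_n+\pi\theta_n)=\pm\cos(\pi\theta_n)$, and the elementary inequality $\cos t\ge 1-t^2/2$ yields $|\cos(\pi\theta_n)|\ge 1-\tfrac{\pi^2}{2}\theta_n^2\ge 1-\tfrac{\pi^2}{2q_n^2}\ge 1-\tfrac{2\pi^2}{q_n^2}$ for all sufficiently large $n$. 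Combining gives (iii).

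I do not expect a genuine obstacle. The only delicate point is the $2$-adic bookkeeping in the opening step, where $v_2=v_2(q_n)$ must be tracked exactly so that the residual term is $\tfrac{\pi}{2}m^2$ with $m$ odd (and not some nearby value); the rest is the addition formula, the crude estimate $\cos t\ge 1-t^2/2$, and absorbing numerical constants as $q_n\to\infty$ in (iii).
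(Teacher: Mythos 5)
Your proposal is correct and follows essentially the same route as the paper: the paper also reduces $2\pi^{k+1}q_nx_n$ to an odd multiple of $\pi/2$ (its $w_n=(2^{2+2v_2}+1)m^2$ is exactly your odd residue), then obtains (i) and (ii) from the addition formula with $\cos(2\pi^{k+1}q_nx_n)=0$, and gets (iii) from $|\,p_n-\pi^kq_n|\le 1/q_n$ together with the bound $\cos t\ge 1-t^2/2$. The only (harmless) divergence is in part (iii), whose statement has a typographical parenthesis defect: you read the argument as $(2x_n+1)\pi^{k+1}q_n$ while the paper's proof treats it as the quantity of (ii), but both readings yield the claimed lower bound $1-2\pi^2/q_n^2$ by the identical mechanism, and your version actually spells out the numerical estimate the paper only asserts.
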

\begin{proof} Observe that the value $x_n$ in \eqref{eq6699.702} yields 
\begin{equation} \label{eq6699.720}
\sin(2\pi^{k+1}q_nx_n)=\sin\left (2\pi^{k+1}q_n\left (\frac{2^{2+2v_2}+1}{2^{2+2v_2}}\right )\frac{q_n}{\pi^{k}}\right )=\sin\left (\frac{\pi}{2}\cdot w_n\right )=\pm1,
\end{equation}
and 
\begin{equation} \label{eq6699.724}
\cos\left (2\pi^{k+1}q_nx_n\right )=\cos\left (2\pi^{k+1}q_n\left (\frac{2^{2+2v_2}+1}{2^{2+2v_2}}\right )\frac{q_n}{\pi^{k}}\right )=\cos\left (\frac{\pi}{2}\cdot w_n\right )=0,
\end{equation}
where
\begin{equation} \label{eq6699.722}
w_n=\left (\frac{2^{2+2v_2}+1}{2^{2v_2}}\right ) q_n^{2} 
\end{equation}
is an odd integer.
(i) Routine calculations yield this:
\begin{eqnarray} \label{eq6699.703}
\sin((2(x_n-1/2)+1) \pi^{k+1}q_n)
&=&\sin\left (2\pi^{k+1}q_nx_n\right ) \\
&=&\sin\left (2\pi^{k+1}q_n\left (\frac{2^{2+2v_2}+1}{2^{2+2v_2}}\right )\frac{q_n}{\pi^{k}}\right )  \nonumber\\
&=&\sin\left (\frac{\pi}{2}\cdot w_n\right )  \nonumber\\
&=&\pm1 \nonumber,
\end{eqnarray}
(ii) Routine calculations yield this:
\begin{eqnarray} \label{eq6699.703}
\sin\left ((2(x_n+1/2) +1) \pi^{k+1}q_n \right )
&=&\sin(2\pi^{k+1}q_nx_n+ 2\pi^{k+1}q_n) \\
&=&\sin(2\pi^{k+1}q_nx_n)\cos( 2\pi^{k+1}q_n) \nonumber\\
&&\qquad \qquad + \cos(2\pi^{k+1}q_nx_n)\sin( 2\pi^{k+1}q_n)  \nonumber.
\end{eqnarray}
Substituting \eqref{eq6699.720} and \eqref{eq6699.724} into \eqref{eq6699.703} return 
\begin{equation} \label{eq6699.725}
\sin\left (2(x_n+1/2)+1)\pi^{k+1}q_n\right )=\pm\cos\left (2 \pi^{k+1}q_n\right).
\end{equation}
(iii) This follows from the previous result:
\begin{eqnarray} \label{eq6699.727}
\left |\sin\left (2(x_n+1/2)+1)\pi^{k+1}q_n\right ) \right |
&=&\left |\pm\cos\left (2 \pi^{k+1}q_n\right) \right |\\
&=&\left |\pm\cos\left ( 2\pi p_n-2\pi^{k+1} q_n\right)\right | \nonumber\\
&=&\left |\pm\cos\left ( 2\pi \left 
(p_n-\pi^{k} q_n\right ) \right)\right | \nonumber\\
&\asymp  &1\nonumber,
\end{eqnarray}
since the sequence of convergents satisfies $\left | p_n-\pi^{k}q_n \right | \leq 1/q_n$ as $n \to \infty$. 
\end{proof}
\begin{lem}\label{lem6699.805} Let $k\geq 1$ be a small fixed integer, let $\{p_n/q_n : n \geq 1\}$ be the sequence of convergents of the real number $\pi^{k}$, and define the associated sequence
\begin{equation} \label{eq6699.802} 
 x_n=\left (\frac{2^{2+2v_2}+1}{2^{2+2v_2}}\right )\frac{q_n}{\pi^{k}}  ,
\end{equation} 
where $v_2=v_2(q_n)=\max\{v:2^v\mid q_n\}$ is the $2$-adic valuation, and $n \geq 1$. Then
\begin{equation} \label{eq6699.725}
\left |\sin\left (2x^{*}+1)\pi^{k+1}q_n\right ) \right |\asymp 1,
\end{equation}
where $x^{*}\in [x_n-1/2, x_n+1/2]$ is an integer.
\end{lem}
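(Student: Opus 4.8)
The plan is to pin down the integer $x^{*}$ explicitly and then rewrite $\sin\bigl((2x^{*}+1)\pi^{k+1}q_n\bigr)$ in terms of a quantity that Lemma~\ref{lem6699.705} already controls. Since $\pi^{k}$ is irrational, $x_n=(2^{2+2v_2}+1)q_n/(2^{2+2v_2}\pi^{k})$ is irrational, so the closed interval $[x_n-\tfrac12,\,x_n+\tfrac12]$, having length exactly $1$, contains a unique integer; I take $x^{*}$ to be that integer and write $x^{*}=x_n+\delta$ with $|\delta|<\tfrac12$. First I would record, from the proof of Lemma~\ref{lem6699.705}, that $2\pi^{k+1}q_n x_n=\tfrac{\pi}{2}w_n$ with $w_n$ odd, say $w_n=2m_n+1$. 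Then
\[
(2x^{*}+1)\pi^{k+1}q_n=2\pi^{k+1}q_n x_n+(2\delta+1)\pi^{k+1}q_n=\pi m_n+\tfrac{\pi}{2}+(2\delta+1)\pi^{k+1}q_n ,
\]
and the addition formula for sine reduces the assertion to a lower bound of the form $\bigl|\cos\bigl((2\delta+1)\pi^{k+1}q_n\bigr)\bigr|\gg 1$.

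Next I would strip off the integer multiples of $\pi$ hidden in $(2\delta+1)\pi^{k+1}q_n$. Writing $\pi^{k}q_n=p_n-\varepsilon_n$ with $\varepsilon_n=p_n-\pi^{k}q_n$, and using $x^{*},p_n\in\Z$ together with the identity $2\pi^{k+1}q_n x_n=\pi m_n+\tfrac{\pi}{2}$, a short computation of the same type as the one leading to \eqref{eq6699.727} should give
\[
(2\delta+1)\pi^{k+1}q_n\equiv -\tfrac{\pi}{2}-\pi\varepsilon_n(2x^{*}+1)\pmod{\pi},
\]
whence $\bigl|\sin\bigl((2x^{*}+1)\pi^{k+1}q_n\bigr)\bigr|=\bigl|\sin\bigl(\pi\varepsilon_n(2x^{*}+1)\bigr)\bigr|$. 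Since $x^{*}=x_n+\delta\asymp q_n/\pi^{k}$ and $|\delta|<\tfrac12$, one has $\varepsilon_n(2x^{*}+1)=2(2^{2+2v_2}+1)\varepsilon_n q_n/(2^{2+2v_2}\pi^{k})+O(\varepsilon_n)$, which lies in a fixed compact subset of $(-1,1)$; since $|\sin\pi t|\gg|t|$ on such a set, the assertion would follow once one knows that $|\varepsilon_n q_n|=q_n\,|p_n-\pi^{k}q_n|$ is bounded below along the sequence under consideration.

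That last point is what I expect to be the main obstacle. By the best-approximation principle invoked in the proof of Lemma~\ref{lem5534.505} one has $q_n\,|p_n-\pi^{k}q_n|\asymp q_n/q_{n+1}$, so the needed inequality $|\varepsilon_n q_n|\gg 1$ is equivalent to $q_{n+1}\ll q_n$ along the lacunary subsequence $\{q_n\}$ that is actually used in Section~\ref{s6699}. Controlling how fast the denominators of the relevant convergents of $\pi^{k}$ can grow is the genuinely hard input here, and it is precisely the structural fact on which the whole kernel argument rests; granting it, the three displays above combine to give $\bigl|\sin\bigl((2x^{*}+1)\pi^{k+1}q_n\bigr)\bigr|\asymp 1$, as asserted.
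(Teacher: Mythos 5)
Your reduction is sound exactly up to the point you flag: since $x^{*}\in\Z$, writing $\varepsilon_n=p_n-\pi^{k}q_n$ gives
\begin{equation*}
\left|\sin\left((2x^{*}+1)\pi^{k+1}q_n\right)\right|=\left|\sin\left(\pi\varepsilon_n(2x^{*}+1)\right)\right|,
\end{equation*}
and since $2x^{*}+1\asymp q_n/\pi^{k}$ while $|\varepsilon_n|\asymp 1/q_{n+1}$, the right-hand side is of exact order $q_n/q_{n+1}$. The step you defer (``granting it'') is therefore the entire content of the lemma: the lower bound $q_n\left|p_n-\pi^{k}q_n\right|\gg 1$ is equivalent to $q_{n+1}\ll q_n$, that is, to boundedness of the partial quotients of $\pi^{k}$. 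Nothing in the paper supplies this; there is no freedom to pass to a favourable subsequence, because the applications in Sections \ref{s6092}--\ref{s7729} need the estimate at every convergent; and boundedness of the partial quotients of $\pi^{k}$ is an open problem, one which would by itself already imply the theorems the lemma is meant to serve. So what you have is a conditional argument, not a proof of the statement.

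For comparison, the paper takes a different route: it evaluates $f(x)=\left|\sin\left((2x+1)\pi^{k+1}q_n\right)\right|$ at the two endpoints $x_n\mp 1/2$ via Lemma \ref{lem6699.705}, obtaining the values $1$ and at least $1-2\pi^{2}/q_n^{2}$, and then concludes ``by continuity'' that the integer point $x^{*}$ of the interval gives a value in between. That argument never meets the quantity $q_n/q_{n+1}$, but your exact computation shows the value at the interior integer point really is $\asymp q_n/q_{n+1}$; endpoint data for a sine of frequency $\pi^{k+1}q_n$, which runs through many full periods inside an interval of length one, cannot control its interior values. In short, your analysis correctly isolates where the genuine difficulty lies, but the missing input $q_{n+1}\ll q_n$ cannot be borrowed from the paper or from known results, and without it the asserted bound $\asymp 1$ does not follow.
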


\begin{proof} Consider the continuous function $f(x)=\left |\sin\left (2x+1)\pi^{k+1}q_n\right ) \right |$ over the interval $[x_n-1/2, x_n+1/2]$. By Lemma \ref{lem6699.705}, it has a local maximal at $x=x_n-1/2 \in \R$: 
\begin{eqnarray} \label{eq6699.303}
\left | \sin\left ((2x+1)\pi^{k+1}z\right )\right | &= &\left |\sin\left ((2(x_n-1/2)+1) \pi^{k+1} q_n\right )\right |\\
&=&1 \nonumber,
\end{eqnarray}
and it has a local minimal at $x=x_n+1/2 \in \R$:
\begin{eqnarray} \label{eq6699.303}
\left | \sin\left ((2x+1)\pi^{k+1}z\right )\right | &= &\left |\sin\left ((2(x_n+1/2)+1) \pi^{k+1} q_n\right )\right |\\
&\geq &1 -\frac{2\pi^2}{q_n^2} \nonumber.
\end{eqnarray}
Since $f(x)$ is continuous over the interval $[x_n-1/2, x_n+1/2]$, it follows that  
\begin{equation} \label{eq6699.877}
1 -\frac{2\pi^2}{q_n^2} \leq \left | \sin\left ((2x^{*}+1)\pi^{k+1}z\right )\right | \leq 1 \nonumber
\end{equation}
for any integer $x^{*}\in [x_n-1/2, x_n+1/2]$ \end{proof}

\begin{thm} \label{thm6699.300}  If $k\geq 1$ is a small fixed integer, and $z \in \N$ is a large integer, then,
\begin{equation} \label{eq6699.300}
 \left |\frac{1}{\sin \pi^{k+1} z}\right |\ll \left |z\right |.
\end{equation}
\end{thm}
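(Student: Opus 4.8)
The plan is to realize $1/\sin(\pi^{k+1}z)$ as a quotient furnished by the Dirichlet kernel and then to choose the free parameter so that the denominator is pinned near $\pm 1$ while the numerator grows only linearly in $z$. First I would use Lemma \ref{lem5534.505} to pass from an arbitrary large integer $z$ to a convergent denominator $q_n$ of $\pi^{k}$ with $q_n\asymp z$, so that it suffices to prove $1/\left|\sin(\pi^{k+1}q_n)\right|\ll q_n$.

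Next, applying the closed form of Definition \ref{dfn6634.100} with the real argument $\pi^{k+1}q_n$ gives, for every parameter $x$,
\[
\frac{1}{\sin(\pi^{k+1}q_n)} \;=\; \frac{\mathcal{D}_x\!\left(\pi^{k+1}q_n\right)}{\sin\!\left((2x+1)\pi^{k+1}q_n\right)}.
\]
I would then take $x=x^{*}$ to be an integer in the window $[x_n-1/2,\,x_n+1/2]$, with $x_n=\big((2^{2+2v_2}+1)/2^{2+2v_2}\big)q_n/\pi^{k}$ as in Lemmas \ref{lem6699.705} and \ref{lem6699.805}. The denominator is then controlled by Lemma \ref{lem6699.805}, which gives $\left|\sin\!\left((2x^{*}+1)\pi^{k+1}q_n\right)\right|\asymp 1$; the numerator is controlled by the elementary bound $\left|\mathcal{D}_x(w)\right|\ll|x|$ recorded just after Definition \ref{dfn6634.102}. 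Since the prefactor $(2^{2+2v_2}+1)/2^{2+2v_2}$ lies in $(1,5/4]$ and $\pi^{k}$ is a fixed constant, $x^{*}\asymp x_n\asymp q_n$, so that $1/\left|\sin(\pi^{k+1}q_n)\right|\ll|x^{*}|\ll q_n\ll z$, which is the assertion.

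The step I expect to be the main obstacle is the control of the denominator, that is, checking that rounding the \emph{resonant} real value $x_n$ to a nearby integer $x^{*}$ does not let $\sin\!\left((2x^{*}+1)\pi^{k+1}q_n\right)$ drift towards $0$. This is exactly Lemma \ref{lem6699.805}, whose proof relies on the exact evaluations in Lemma \ref{lem6699.705} (the key being that $w_n$ is an odd integer, which forces the sine to equal $\pm 1$ at $x=x_n-1/2$), together with the convergent bound $\left|p_n-\pi^{k}q_n\right|\le 1/q_n$ and an intermediate-value argument on $[x_n-1/2,\,x_n+1/2]$; the error term $2\pi^2/q_n^2\to 0$ is why the estimate $\asymp 1$ is legitimate for large $z$. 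A secondary point worth making explicit is that Lemma \ref{lem5534.505} can indeed be invoked with $q_n\asymp z$, which follows from the best-approximation property of continued-fraction convergents. All implied constants here may depend on the fixed integer $k$.
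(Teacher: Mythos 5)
Your proposal follows essentially the same route as the paper's own proof of Theorem \ref{thm6699.300}: reduce to $z=q_n$ via Lemma \ref{lem5534.505}, write $1/\sin(\pi^{k+1}q_n)$ as $\mathcal{D}_{x}(\pi^{k+1}q_n)/\sin((2x+1)\pi^{k+1}q_n)$, pin the denominator at $\asymp 1$ by choosing an integer $x^{*}\in[x_n-1/2,x_n+1/2]$ via Lemmas \ref{lem6699.705} and \ref{lem6699.805}, and bound the numerator by $|\mathcal{D}_{x^{*}}|\ll|x^{*}|\asymp q_n\asymp z$. You also correctly identify that the entire weight of the argument rests on Lemma \ref{lem6699.805} (the rounding of $x_n$ to $x^{*}$ not degrading the denominator), which is exactly the step the paper delegates to that lemma.
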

\begin{proof} Let $\{p_n/q_n : n \geq 1\}$ be the sequence of convergents of the real number $\pi^{k}$. Since the denominators sequence $\{q_n : n \geq 1\}$ maximize the reciprocal sine function $1/\sin \pi^{k+1} z$, see Lemma \ref{lem5534.505}, it is sufficient to prove it for $z=q_n$. Define the associated sequence
\begin{equation} \label{eq6699.302} 
 x_n=\left (\frac{2^{2+2v_2}+1}{2^{2+2v_2}}\right )\frac{q_n}{\pi^{k}}  ,
\end{equation} 
where $v_2=v_2(q_n)=\max\{v:2^v\mid q_n\}$ is the $2$-adic valuation, and $n \geq 1$. Let $f(x)=\left | \sin\left ((2x+1)\pi^{k+1}z\right )\right | $, and let $z= q_n$. The function $f(x)$ is bounded over the interval $[x_n-1/2,x_n+1/2]$, see Lemma \ref{lem6699.705}.  Replacing the integer parameters $x^{*}\in [x_n-1/2, x_n+1/2]$, $z= q_n$, and applying Lemma \ref{lem6699.705} return 
\begin{eqnarray} \label{eq6699.303}
\left | \sin\left ((2x+1)\pi^{k+1}z\right )\right | &= &\left |\sin\left ((2x^{*}+1) \pi^{k+1} q_n\right )\right |\\
&\asymp&1 \nonumber.
\end{eqnarray}
Rewrite the reciprocal sine function in terms of the harmonic kernel in Definition \ref{dfn6634.100}, and splice all these information together, to obtain
\begin{eqnarray} \label{eq6699.313}
 \left |\frac{1}{\sin \pi^{k+1}z}\right |  &=& \left |\frac{\mathcal{D}_{x}(\pi^{k+1}z)}{\sin((2x+1) \pi^{k+1}z)}\right |\nonumber\\
&\ll&\left |\mathcal{D}_{x^{*}}  \right | \left |\frac{1}{\sin((2x^{*}+1) \pi^{k+1}q_n)}\right |\\
&\ll&\left |x^{*}\right |\cdot 1\nonumber\\
&\ll&\left |z\right |\nonumber
\end{eqnarray}
since $|z|\asymp x^{*}\asymp p_n\asymp q_n$, and the trivial estimate $\left |\mathcal{D}_x(z) \right | \ll \left |x\right |$.
\end{proof}
\section{Upper Bound For $\left | 1/\sin \pi^3 z \right |$}\label{s6634}
As shown in Lemma \ref{lem5534.505}, to estimate the upper bound of the function $1/|\sin \pi^{3} z|$ over the real numbers $z \in\R$, it is sufficient to fix $z=q_n$, and select a real number $x \in \R$ such that $q_n \asymp x$. This idea is demonstrated below.

\begin{lem}\label{lem6634.705} Let $\{p_n/q_n : n \geq 1\}$ be the sequence of convergents of the real number $\pi^{2}$, and define the associated sequence
\begin{equation} \label{eq6634.702} 
 x_n=\left (\frac{2^{2+2v_2}+1}{2^{2+2v_2}}\right )\frac{q_n}{\pi^{2}}  ,
\end{equation} 
where $v_2=v_2(q_n)=\max\{v:2^v\mid q_n\}$ is the $2$-adic valuation, and $n \geq 1$. Then
 \begin{enumerate} [font=\normalfont, label=(\roman*)]
\item$\displaystyle  \sin\left (2 (x_n-1/2)+1)\pi^{3}q_n\right ) =\pm 1$.
\item$\displaystyle  \sin\left (2 (x_n+1/2)+1)\pi^{3}q_n\right ) =\pm \cos 2\pi^{3}q_n$.
\item$\displaystyle  \left | \sin\left (2 x_n+1/2)\pi^{3}q_n\right ) \right |\geq 1 -\frac{2\pi^2}{q_n^2},$  as $x \to \infty$.
\end{enumerate}
\end{lem}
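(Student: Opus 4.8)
The plan is to run the argument of Lemma~\ref{lem6699.705} verbatim in the special case $k=2$, so that $\pi^{k}=\pi^{2}$ and $\pi^{k+1}=\pi^{3}$, and $\{p_n/q_n\}$ are the convergents of $\pi^{2}$. The whole lemma rests on one algebraic identity obtained by plugging the definition \eqref{eq6634.702} of $x_n$ into $2\pi^{3}q_n x_n$: the factor $q_n/\pi^{2}$ cancels two of the three powers of $\pi$, leaving
\[
2\pi^{3}q_n x_n \;=\; 2\pi q_n^{2}\left(\frac{2^{2+2v_2}+1}{2^{2+2v_2}}\right)\;=\;\frac{\pi}{2}\,w_n,
\qquad w_n=\left(\frac{2^{2+2v_2}+1}{2^{2v_2}}\right)q_n^{2},
\]
which is exactly \eqref{eq6699.722} with $k=2$. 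First I would verify that $w_n$ is an odd integer: writing $q_n=2^{v_2}m$ with $m$ odd gives $w_n=(2^{2+2v_2}+1)m^{2}$, a product of two odd integers. Hence $\sin(2\pi^{3}q_n x_n)=\sin(\tfrac{\pi}{2}w_n)=\pm1$ and $\cos(2\pi^{3}q_n x_n)=\cos(\tfrac{\pi}{2}w_n)=0$, the analogues of \eqref{eq6699.720} and \eqref{eq6699.724}.

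With these two facts in hand the three claims are routine. For (i), note $2(x_n-1/2)+1=2x_n$, so the left side is $\sin(2\pi^{3}q_n x_n)=\pm1$. For (ii), note $2(x_n+1/2)+1=2x_n+2$, expand $\sin(2\pi^{3}q_n x_n+2\pi^{3}q_n)$ by the addition formula, and discard the second summand because $\cos(2\pi^{3}q_n x_n)=0$; this leaves $\pm\cos(2\pi^{3}q_n)$. For (iii) — whose intended argument is $(2(x_n+1/2)+1)\pi^{3}q_n$, matching (ii), and whose ``$x\to\infty$'' should read $n\to\infty$ — I would rewrite $\cos(2\pi^{3}q_n)=\cos(2\pi p_n-2\pi^{3}q_n)=\cos\!\big(2\pi(p_n-\pi^{2}q_n)\big)$, then invoke the best-approximation bound $|p_n-\pi^{2}q_n|\le 1/q_{n+1}\le 1/q_n$ for convergents together with the elementary inequality $\cos t\ge 1-t^{2}/2$ to get $|\cos(2\pi^{3}q_n)|\ge 1-2\pi^{2}/q_n^{2}$.

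I do not expect a genuine obstacle; the single delicate point is the parity check, namely recognizing that the multiplier $\dfrac{2^{2+2v_2}+1}{2^{2+2v_2}}$ in $x_n$ is designed precisely so that $2\pi^{3}q_n x_n$ lands on an odd multiple of $\pi/2$ for \emph{every} value of the $2$-adic valuation $v_2(q_n)$ — this is what pins $\sin$ to $\pm1$ and $\cos$ to $0$. Once that is established, everything reduces to the angle-addition identity and the defining inequality of continued-fraction convergents, so the proof is short.
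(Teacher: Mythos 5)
Your proposal is correct and is essentially the paper's own argument: the paper proves this lemma by citing Lemma~\ref{lem6699.705} verbatim with $k=2$, which is exactly the specialization you carry out, including the key identity $2\pi^{3}q_nx_n=\tfrac{\pi}{2}w_n$ with $w_n$ an odd integer. Your added details (the explicit parity check $w_n=(2^{2+2v_2}+1)m^{2}$ and the use of $\cos t\ge 1-t^{2}/2$ with $|p_n-\pi^{2}q_n|\le 1/q_{n+1}$ to get the stated bound in (iii)) only make explicit what the paper leaves implicit.
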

\begin{proof}
Same as Lemma \ref{lem6699.705}.
\end{proof}
\begin{lem}\label{lem6634.805} Let $\{p_n/q_n : n \geq 1\}$ be the sequence of convergents of the real number $\pi^{2}$, and define the associated sequence
\begin{equation} \label{eq6634.802} 
 x_n=\left (\frac{2^{2+2v_2}+1}{2^{2+2v_2}}\right )\frac{q_n}{\pi^{k}}  ,
\end{equation} 
where $v_2=v_2(q_n)=\max\{v:2^v\mid q_n\}$ is the $2$-adic valuation, and $n \geq 1$. Then
\begin{equation} \label{eq6634.725}
\left |\sin\left (2x^{*}+1)\pi^{3}q_n\right ) \right |\asymp 1,
\end{equation}
where $x^{*}\in [x_n-1/2, x_n+1/2]$ is an integer.
\end{lem}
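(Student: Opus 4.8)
The plan is to run the argument of Lemma \ref{lem6699.805} with $k=2$: the number $\pi^3$ is exactly the case $\pi^{k+1}$, $k=2$, of that lemma, and all the trigonometric identities that are needed have already been recorded in Lemma \ref{lem6634.705}. So the proof is a transcription, but let me spell out the steps.

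Fix $n$ and study the continuous function $f(x)=\left|\sin\left((2x+1)\pi^{3}q_n\right)\right|$ on the closed interval $I_n=[x_n-1/2,\,x_n+1/2]$, with $x_n$ as in \eqref{eq6634.802} and $v_2=v_2(q_n)$. First I would evaluate $f$ at the two endpoints using Lemma \ref{lem6634.705}: part (i) gives $f(x_n-1/2)=\left|\sin\left(2\pi^{3}q_n x_n\right)\right|=1$, while parts (ii)--(iii) give $f(x_n+1/2)=\left|\cos\left(2\pi^{3}q_n\right)\right|=\left|\cos\left(2\pi\left(p_n-\pi^{2}q_n\right)\right)\right|\geq 1-2\pi^{2}/q_n^{2}$, where the last step uses $\left|p_n-\pi^{2}q_n\right|\leq 1/q_n$ and $1-\cos t\leq t^{2}/2$. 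Thus on $I_n$ the function $f$ equals $1$ at the left endpoint and stays $\geq 1-2\pi^{2}/q_n^{2}$ at the right endpoint, so $x_n-1/2$ is the relevant local maximum and $x_n+1/2$ the relevant local minimum of $f$ on $I_n$.

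Next, note that $I_n$ has length $1$, hence contains at least one integer $x^{*}$, and that $f$ is continuous on $I_n$ with values in $[0,1]$. Combining this with the endpoint information above forces
\[
1-\frac{2\pi^{2}}{q_n^{2}}\;\leq\; f(x^{*})\;\leq\; 1 .
\]
Letting $n\to\infty$, the lower bound tends to $1$, so $\left|\sin\left((2x^{*}+1)\pi^{3}q_n\right)\right|=f(x^{*})\asymp 1$, which is \eqref{eq6634.725}.

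The step I expect to be the real obstacle is the one that Lemma \ref{lem6634.705} (via Lemma \ref{lem6699.705}) is tailored to dispatch: pinning down where the extremely rapidly oscillating function $f$ sits at the particular lattice point $x^{*}$. The non-routine input is the explicit choice of $x_n$ in \eqref{eq6634.802}: because $w_n=\left(2^{2+2v_2}+1\right)q_n^{2}/2^{2v_2}$ is an odd integer, $\sin\left(2\pi^{3}q_n x_n\right)$ lands exactly on $\pm 1$ and $\cos\left(2\pi^{3}q_n x_n\right)$ exactly on $0$; after that, everything reduces to the continuity-and-interval-length bookkeeping above and the approximation quality $\left|p_n-\pi^{2}q_n\right|\leq 1/q_n$. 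One should also record that the integer $x^{*}$ produced here is consistent with the integer used in Theorem \ref{thm6699.300}, since both are chosen in $[x_n-1/2,\,x_n+1/2]$ and satisfy $x^{*}\asymp x_n\asymp q_n$.
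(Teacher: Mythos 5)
You have transcribed the paper's intended argument accurately (the paper's own proof is literally ``Same as Lemma \ref{lem6699.805}'', i.e.\ the endpoint-plus-continuity argument you spell out), but the decisive step is a non sequitur, in your version just as in the paper's. From $f(x_n-1/2)=1$, $f(x_n+1/2)\geq 1-2\pi^{2}/q_n^{2}$ and the continuity of $f$ on an interval of length $1$ you cannot conclude anything about $f$ at an interior point: a continuous function taking values in $[0,1]$ that is close to $1$ at both endpoints of a unit interval can vanish inside it. Here this is not a hypothetical worry: $f(x)=\left|\sin\left((2x+1)\pi^{3}q_n\right)\right|$ has period $1/(2\pi^{2}q_n)$ in $x$, so it has on the order of $q_n$ zeros between $x_n-1/2$ and $x_n+1/2$. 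The two endpoint evaluations supplied by Lemma \ref{lem6634.705} pin down $f$ at two specific real numbers only; they exert no control whatsoever at the nearest integer $x^{*}$, which may lie up to $1/2$ away, i.e.\ many thousands of periods from either endpoint. Nothing ``forces'' $1-2\pi^{2}/q_n^{2}\leq f(x^{*})$, and your closing remark that the hard part is already ``dispatched'' by the choice of $x_n$ is exactly where the argument breaks: the clever choice of $x_n$ controls $f$ at $x_n-1/2$ and $x_n+1/2$, not at the lattice point you actually need.

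To see that no soft continuity argument can close this gap, compute $f(x^{*})$ directly for an integer $x^{*}$: writing $\delta_n=\pi^{2}q_n-p_n$, one has $f(x^{*})=\left|\sin\left((2x^{*}+1)\pi p_n+(2x^{*}+1)\pi\delta_n\right)\right|=\left|\sin\left((2x^{*}+1)\pi\delta_n\right)\right|$, and since $x^{*}\asymp q_n/\pi^{2}$ and $|\delta_n|\asymp 1/q_{n+1}$, this is of size $q_n/q_{n+1}\asymp 1/a_{n+2}$ (in the notation of Lemma \ref{lem2000.101} applied to $\pi^{2}$). Hence the conclusion $f(x^{*})\asymp 1$ is essentially equivalent to the partial quotients of $\pi^{2}$ being bounded, which is an open problem and is, in substance, the very statement (irrationality measure $2$ for $\pi^{2}$) that this lemma is meant to feed into Theorem \ref{thm6634.300}. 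So the defect is not a bookkeeping issue you could patch; the continuity step fails, and with it the lemma as stated is unproven.
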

\begin{proof}
Same as Lemma \ref{lem6699.805}.
\end{proof}
\begin{thm} \label{thm6634.300}   Let $z \in \N$ be a large integer. Then,
\begin{equation} \label{eq6634.300}
 \left |\frac{1}{\sin \pi^3 z}\right |\ll \left |z\right |.
\end{equation}
\end{thm}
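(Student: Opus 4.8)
The plan is to run the argument of Theorem \ref{thm6699.300} in the special case $k=2$, so that $\pi^{k+1}=\pi^3$ and the relevant convergents are those of $\pi^2$. First I would invoke Lemma \ref{lem5534.505} with $k=2$: since the denominators $\{q_n:n\geq1\}$ of the convergents of $\pi^2$ maximize the reciprocal sine $1/|\sin\pi^3 z|$ over the integers, it is enough to establish the bound \eqref{eq6634.300} along the subsequence $z=q_n$. This reduction is the conceptual heart of the argument, but it is already supplied by Lemma \ref{lem5534.505}, so no new work is needed here.

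Next I would introduce the auxiliary real parameter
\begin{equation} \label{eq6634.910}
x_n=\left(\frac{2^{2+2v_2}+1}{2^{2+2v_2}}\right)\frac{q_n}{\pi^{2}},
\end{equation}
with $v_2=v_2(q_n)$ the $2$-adic valuation of $q_n$, exactly as in \eqref{eq6634.702}. By Lemmas \ref{lem6634.705} and \ref{lem6634.805}, for any integer $x^{*}\in[x_n-1/2,\,x_n+1/2]$ one has $\left|\sin\bigl((2x^{*}+1)\pi^{3}q_n\bigr)\right|\asymp 1$; concretely the value is squeezed between $1-2\pi^2/q_n^2$ and $1$ as $n\to\infty$. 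Such an integer $x^{*}$ exists because the interval has length one, and it satisfies $x^{*}\asymp q_n$ since the fractional factor in \eqref{eq6634.910} lies in $(1,2)$ and $q_n/\pi^2\asymp q_n$.

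Then I would rewrite the reciprocal sine through the Dirichlet kernel of Definition \ref{dfn6634.100}, namely $\mathcal{D}_{x^{*}}(\pi^{3}q_n)=\sin\bigl((2x^{*}+1)\pi^{3}q_n\bigr)/\sin(\pi^{3}q_n)$, and combine the three ingredients:
\begin{eqnarray} \label{eq6634.920}
\left|\frac{1}{\sin\pi^{3}q_n}\right|
&=&\left|\frac{\mathcal{D}_{x^{*}}(\pi^{3}q_n)}{\sin\bigl((2x^{*}+1)\pi^{3}q_n\bigr)}\right| \nonumber\\
&\ll&\left|\mathcal{D}_{x^{*}}(\pi^{3}q_n)\right|\cdot 1 \\
&\ll&|x^{*}|\ \ll\ |q_n| \nonumber,
\end{eqnarray}
using the denominator bound $\asymp 1$ from the previous step and the trivial estimate $|\mathcal{D}_x(z)|\ll|x|$ recorded after Definition \ref{dfn6634.102}. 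Since $z=q_n$ was the only case to treat, the bound $|1/\sin\pi^3 z|\ll|z|$ follows for all large $z\in\N$. The step I expect to require the most care is verifying, via Lemmas \ref{lem6634.705}--\ref{lem6634.805}, that the chosen integer $x^{*}$ keeps $|\sin((2x^{*}+1)\pi^3 q_n)|$ bounded away from zero uniformly in $n$ — this is where the precise $2$-adic correction factor in \eqref{eq6634.910} is used, ensuring the argument of the sine lands near $\tfrac{\pi}{2}$ times an odd integer; everything else is routine bookkeeping carried over verbatim from Theorem \ref{thm6699.300}.
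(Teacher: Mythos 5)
Your proposal follows the paper's own proof of Theorem \ref{thm6634.300} essentially verbatim: the same reduction to $z=q_n$ via Lemma \ref{lem5534.505}, the same auxiliary sequence $x_n$ with the $2$-adic correction factor, the same appeal to Lemmas \ref{lem6634.705}--\ref{lem6634.805} to keep $\left|\sin\left((2x^{*}+1)\pi^3 q_n\right)\right|\asymp 1$, and the same Dirichlet-kernel rewriting with the trivial bound $\left|\mathcal{D}_x\right|\ll|x|$ to conclude $\ll|x^{*}|\ll|z|$. No substantive difference from the paper's argument.
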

\begin{proof} Let $\{p_n/q_n : n \geq 1\}$ be the sequence of convergents of the real number $\pi^2$. Since the denominators sequence $\{q_n : n \geq 1\}$ maximize the reciprocal sine function $1/\sin \pi^3 z$, it is sufficient to prove it for $z=q_n$. Define the associated sequence
\begin{equation} \label{eq6634.302} 
 x_n=\left (\frac{2^{2+2v_2}+1}{2^{2+2v_2}}\right )\frac{q_n}{\pi^2}  ,
\end{equation} 
where $v_2=v_2(q_n)=\max\{v:2^v\mid q_n\}$ is the $2$-adic valuation, and $n \geq 1$. Replacing the integer parameters $x^{*}\in [x_n-1/2, x_n+1/2]$, $z= q_n$, and applying Lemma \ref{lem6634.805} return 
\begin{eqnarray} \label{eq6634.303}
\left | \sin\left ((2x+1)\pi^3z\right )\right | &= &\left |\sin\left ((2x^{*}+1) \pi^3 q_n\right )\right |\\
&\asymp&1 \nonumber,
\end{eqnarray}
since the sequence of convergents satisfies $\left | p_n-\pi^2q_n \right | \to 0 $ as $n \to \infty$. Rewrite the reciprocal sine function in terms of the harmonic kernel in Definition \ref{dfn6634.100}, and splice all these information together, to obtain
\begin{eqnarray} \label{eq6634.313}
 \left |\frac{1}{\sin \pi^3z}\right |  &=& \left |\frac{\mathcal{D}_{x}(\pi^3z)}{\sin((2x+1) \pi^3z)}\right |\nonumber\\
&\ll&\left |\mathcal{D}_{x^{*}}  \right | \left |\frac{1}{\sin((2x^{*}+1) \pi^3q_n)}\right |\\
&\ll&\left |x^{*}\right |\cdot 1\nonumber\\
&\ll&\left |z\right |\nonumber
\end{eqnarray}
since $|z|\asymp x^{*}\asymp p_n\asymp q_n$, and the trivial estimate $\left |\mathcal{D}_x(z) \right | \ll \left |x\right |$.
\end{proof}

\section{Upper Bound For $1/\left | \sin \pi^4 z \right |$}\label{s7734}
As shown in Lemma \ref{lem5534.505}, to estimate the upper bound of the function $1/|\sin \pi^{4} z|$ over the real numbers $z \in\R$, it is sufficient to fix $z=q_n$, and select a real number $x \in \R$ such that $q_n \asymp x$. This idea is demonstrated below.

\begin{lem}\label{lem7734.705} Let $\{p_n/q_n : n \geq 1\}$ be the sequence of convergents of the real number $\pi^{3}$, and define the associated sequence
\begin{equation} \label{eq7734.702} 
 x_n=\left (\frac{2^{2+2v_2}+1}{2^{2+2v_2}}\right )\frac{q_n}{\pi^{3}}  ,
\end{equation} 
where $v_2=v_2(q_n)=\max\{v:2^v\mid q_n\}$ is the $2$-adic valuation, and $n \geq 1$. Then
 \begin{enumerate} [font=\normalfont, label=(\roman*)]
\item$\displaystyle  \sin\left (2 (x_n-1/2)+1)\pi^{4}q_n\right ) =\pm 1$.
\item$\displaystyle  \sin\left (2 (x_n+1/2)+1)\pi^{4}q_n\right ) =\pm \cos 2\pi^{3}q_n$.
\item$\displaystyle  \left | \sin\left (2 x_n+1/2)\pi^{4}q_n\right ) \right |\geq 1 -\frac{2\pi^2}{q_n^2},$  as $x \to \infty$.
\end{enumerate}
\end{lem}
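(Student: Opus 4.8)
The plan is to follow verbatim the argument of Lemma \ref{lem6699.705} in the specialization $k=3$, the only arithmetic input being the identity $\pi^4/\pi^3=\pi$. Writing $v_2=v_2(q_n)$ and substituting the definition of $x_n$ into the argument of the sine, I would first record the computation
\[
2\pi^4 q_n x_n \;=\; 2\pi^4 q_n\left(\frac{2^{2+2v_2}+1}{2^{2+2v_2}}\right)\frac{q_n}{\pi^{3}}
\;=\;\frac{\pi}{2}\, w_n, \qquad\text{where }\; w_n=\left(\frac{2^{2+2v_2}+1}{2^{2v_2}}\right)q_n^{2}.
\]
The one bookkeeping point is to check that $w_n$ is an \emph{odd} integer: since $2^{v_2}$ exactly divides $q_n$ one may write $q_n=2^{v_2}m$ with $m$ odd, so $q_n^{2}/2^{2v_2}=m^{2}$ is an odd integer while $2^{2+2v_2}+1$ is odd, hence $w_n=(2^{2+2v_2}+1)m^{2}$ is odd. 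It then follows that $\sin(2\pi^4 q_n x_n)=\sin(\pi w_n/2)=\pm 1$ and $\cos(2\pi^4 q_n x_n)=\cos(\pi w_n/2)=0$.

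Part (i) is then immediate, since $2(x_n-1/2)+1=2x_n$, so the left-hand side equals $\sin(2\pi^4 q_n x_n)=\pm1$. For part (ii) I would use $2(x_n+1/2)+1=2x_n+2$ and expand
\[
\sin\bigl(2\pi^4 q_n x_n+2\pi^4 q_n\bigr)=\sin(2\pi^4 q_n x_n)\cos(2\pi^4 q_n)+\cos(2\pi^4 q_n x_n)\sin(2\pi^4 q_n);
\]
the second summand vanishes by the computation above, leaving $\pm\cos(2\pi^4 q_n)$ (so that the $\pm\cos 2\pi^{3}q_n$ printed in the statement is to be read as $\pm\cos 2\pi^{4}q_n$, in line with the general Lemma \ref{lem6699.705}).

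For part (iii) I would invoke that $\{p_n/q_n\}$ are the convergents of $\pi^{3}$, hence $|p_n-\pi^{3}q_n|\le 1/q_{n+1}\le 1/q_n$ for large $n$. Writing $2\pi^4 q_n=2\pi p_n+2\pi(\pi^{3}q_n-p_n)$ and using the $2\pi$-periodicity of cosine together with $p_n\in\Z$ gives $\cos(2\pi^4 q_n)=\cos\bigl(2\pi(\pi^{3}q_n-p_n)\bigr)$; then the elementary bound $\cos\theta\ge 1-\theta^{2}/2$ applied with $|\theta|\le 2\pi/q_n$ yields $|\cos 2\pi^4 q_n|\ge 1-2\pi^{2}/q_n^{2}$ as $n\to\infty$, which combined with part (ii) is exactly the asserted inequality.

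I do not expect a genuine obstacle here: the statement is the $k=3$ instance of an identity already established, and the proof is a matter of carefully tracking the $2$-adic valuation so that $w_n$ comes out odd. The only conceptual caveat worth recording is that $x_n$ is in general \emph{not} an integer, so this lemma is purely a statement about the continuous function $x\mapsto\sin((2x+1)\pi^4 q_n)$ on the interval $[x_n-1/2,\,x_n+1/2]$; the passage to an integer argument $x^{*}\in[x_n-1/2,x_n+1/2]$, which is what ultimately feeds the kernel bound $|1/\sin\pi^4 z|\ll|z|$, is handled separately in the companion lemma modeled on Lemma \ref{lem6699.805}.
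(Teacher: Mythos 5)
Your proposal is correct and is essentially the paper's own proof: the paper simply says ``Same as Lemma \ref{lem6699.705}'', and you reproduce that argument in the case $k=3$, including the key check that $w_n=(2^{2+2v_2}+1)q_n^2/2^{2v_2}$ is an odd integer and the convergent bound $|p_n-\pi^3 q_n|\le 1/q_n$ for part (iii). Your reading of the $\pm\cos 2\pi^{3}q_n$ in (ii) as a typo for $\pm\cos 2\pi^{4}q_n$ is the right interpretation, consistent with the general Lemma \ref{lem6699.705}.
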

\begin{proof}
Same as Lemma \ref{lem6699.705}.
\end{proof}
\begin{lem}\label{lem7734.805} Let $\{p_n/q_n : n \geq 1\}$ be the sequence of convergents of the real number $\pi^{3}$, and define the associated sequence
\begin{equation} \label{eq7734.802} 
 x_n=\left (\frac{2^{2+2v_2}+1}{2^{2+2v_2}}\right )\frac{q_n}{\pi^{3}}  ,
\end{equation} 
where $v_2=v_2(q_n)=\max\{v:2^v\mid q_n\}$ is the $2$-adic valuation, and $n \geq 1$. Then
\begin{equation} \label{eq7734.725}
\left |\sin\left (2x^{*}+1)\pi^{4}q_n\right ) \right |\asymp 1,
\end{equation}
where $x^{*}\in [x_n-1/2, x_n+1/2]$ is an integer.
\end{lem}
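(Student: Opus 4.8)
The plan is to run the argument of Lemma~\ref{lem6699.805} with $k=3$, so that here $\pi^{k+1}=\pi^{4}$ and $\{p_n/q_n:n\ge1\}$ is the convergent sequence of $\pi^{3}$. First I would fix $n\ge1$ and set
\[
 f(x)=\left|\sin\!\big((2x+1)\pi^{4}q_n\big)\right|,
\]
viewed as a continuous function of the real variable $x$ on the closed interval $[x_n-1/2,\,x_n+1/2]$, with $x_n$ the real number defined in \eqref{eq7734.802}. The two endpoint values come from Lemma~\ref{lem7734.705}: part~(i) gives $f(x_n-1/2)=\left|\sin(2\pi^{4}q_nx_n)\right|=1$, since $2\pi^{4}q_nx_n=\tfrac{\pi}{2}w_n$ with $w_n=\big(2^{2+2v_2}+1\big)q_n^{2}/2^{2v_2}$ an odd integer; part~(iii) gives $f(x_n+1/2)=\left|\cos(2\pi^{4}q_n)\right|\ge 1-2\pi^{2}/q_n^{2}$, the decisive input being the best-approximation bound $\left|p_n-\pi^{3}q_n\right|\le 1/q_n$, which keeps $\cos(2\pi^{4}q_n)=\cos\!\big(2\pi(p_n-\pi^{3}q_n)\big)$ within $O(1/q_n^{2})$ of $1$.

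Then, since the interval $[x_n-1/2,\,x_n+1/2]$ has length $1$ it contains an integer $x^{*}$, and I would conclude from the two endpoint evaluations that
\[
 1-\frac{2\pi^{2}}{q_n^{2}}\ \le\ \left|\sin\!\big((2x^{*}+1)\pi^{4}q_n\big)\right|\ \le\ 1,
\]
which is \eqref{eq7734.725}, that is, $\left|\sin\!\big((2x^{*}+1)\pi^{4}q_n\big)\right|\asymp1$ as $n\to\infty$.

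The step I expect to be the real obstacle is precisely this last transfer of the endpoint bounds to the value of $f$ at the interior integer $x^{*}$. Since $f$ has period $1/(2\pi^{3}q_n)\to0$, it oscillates very rapidly on the unit interval, so continuity alone does not trap $f(x^{*})$ between $f(x_n-1/2)$ and $f(x_n+1/2)$; one has to argue directly with the residue of $(2x^{*}+1)\pi^{4}q_n$ modulo $\pi$. Writing $x^{*}=x_n+\delta$ with $|\delta|\le1/2$ and using $2\pi^{4}q_nx_n=\tfrac{\pi}{2}w_n$ with $w_n$ odd, one gets
\[
 \left|\sin\!\big((2x^{*}+1)\pi^{4}q_n\big)\right|=\left|\cos\!\big((2\delta+1)\pi^{4}q_n\big)\right|,
\]
so the task is to show that the $2$-adic normalisation built into $x_n$ in \eqref{eq7734.802} places $x^{*}$ where this cosine is bounded away from $0$. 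I would try to extract this from the parity of $w_n$ directly; failing that, I would enlarge the admissible range for $x^{*}$ to an interval of length $O(q_n)$---which does not change the final estimate, since $x^{*}\asymp q_n$ either way and hence $|\mathcal{D}_{x^{*}}|\ll q_n$---and use a Dirichlet/pigeonhole argument, via the irrationality of $\pi^{3}q_n$, to produce an integer $x^{*}$ with $(2x^{*}+1)\pi^{4}q_n$ within $O(1/q_n)$ of $\tfrac{\pi}{2}w_n$ modulo $\pi$. The remaining ingredients---the trigonometric identities for the three sine values and the reduction to $z=q_n$---are the routine computations already done in Lemmas~\ref{lem6699.705}--\ref{lem6699.805}.
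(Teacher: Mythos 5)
Your proposal follows the paper's route exactly up to the decisive step: the paper proves Lemma \ref{lem7734.805} by the words ``Same as Lemma \ref{lem6699.805}'', and that proof consists of the two endpoint evaluations you describe (value $1$ at $x_n-1/2$ and value $\geq 1-2\pi^2/q_n^2$ at $x_n+1/2$, obtained from Lemma \ref{lem7734.705} together with $\left|p_n-\pi^{3}q_n\right|\leq 1/q_n$), followed by the assertion that, since $f(x)=\left|\sin\left((2x+1)\pi^{4}q_n\right)\right|$ is continuous on $[x_n-1/2,\,x_n+1/2]$, its value at the interior integer $x^{*}$ is sandwiched between the two endpoint values. So where you stop is precisely where the paper finishes, and the finishing move in the paper is exactly the ``continuity alone'' inference that you identify as inadmissible.

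That identification is correct, and it is a genuine gap rather than a routine detail: $f$ has period $1/(\pi^{3}q_n)$ in $x$, so it oscillates on the order of $\pi^{3}q_n$ times and vanishes repeatedly inside an interval of length one, and endpoint values plus continuity say nothing about $f(x^{*})$. Your own reduction makes the obstruction explicit: with $x^{*}=x_n+\delta$, $|\delta|\leq 1/2$, one has $\left|\sin\left((2x^{*}+1)\pi^{4}q_n\right)\right|=\left|\cos\left((2\delta+1)\pi^{4}q_n\right)\right|$, and $\delta$ is governed by the fractional part of the irrational number $x_n$, which the $2$-adic normalisation in \eqref{eq7734.802} does not control; the parity of $w_n$ fixes the value of $f$ at the real point $x_n-1/2$, not at the nearby integer, so no parity argument can close this. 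Your fallback (letting $x^{*}$ range over an interval of length $O(q_n)$ and pigeonholing on $2\pi^{3}q_n x \bmod 1$) is not carried out, proves a statement different from \eqref{eq7734.725}, and would itself need a quantitative inhomogeneous approximation input for $\pi^{3}q_n$ that appears nowhere in the paper; at best it might rescue the use made of the lemma in Theorem \ref{thm7734.304}, where only $x^{*}\asymp q_n$ is needed. In short, your sketch reproduces the paper's computations faithfully, but the lemma's conclusion for the specific integer $x^{*}\in[x_n-1/2,\,x_n+1/2]$ is not established by your argument --- nor, for the same reason, by the paper's.
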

\begin{proof}
Same as Lemma \ref{lem6699.805}.
\end{proof}
\begin{thm} \label{thm7734.304}   Let $z \in \N$ be a large integer. Then,
\begin{equation} \label{eq7734.300}
 \left |\frac{1}{\sin \pi^4 z}\right |\ll \left |z\right |.
\end{equation}
\end{thm}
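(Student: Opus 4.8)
The plan is to run, almost verbatim, the argument that proved Theorem \ref{thm6634.300}, merely moving the auxiliary continued fraction data from $\pi^2$ up to $\pi^3$; in fact the assertion is literally the case $k=3$ of Theorem \ref{thm6699.300}, but it is cleanest to carry the argument out in this concrete setting. So I would let $\{p_n/q_n : n \geq 1\}$ be the sequence of convergents of the real number $\pi^3$. By Lemma \ref{lem5534.505} applied with $k=3$, the denominators $\{q_n : n \geq 1\}$ realise, up to an absolute constant, the supremum of $1/|\sin \pi^4 z|$ over the nonzero integers $z$; hence it suffices to prove $1/|\sin \pi^4 q_n| \ll q_n$ along this one sequence.

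To that end I would introduce the associated parameter
\[
x_n = \left(\frac{2^{2+2v_2}+1}{2^{2+2v_2}}\right)\frac{q_n}{\pi^3}, \qquad v_2 = v_2(q_n) = \max\{v : 2^v \mid q_n\},
\]
as in Lemmas \ref{lem7734.705} and \ref{lem7734.805}. The role of this normalisation is that $2\pi^4 q_n x_n = \tfrac{\pi}{2}\,w_n$, where $w_n = \left(\tfrac{2^{2+2v_2}+1}{2^{2v_2}}\right) q_n^2$ is an odd integer, so that $\sin(2\pi^4 q_n x_n) = \pm 1$ and $\cos(2\pi^4 q_n x_n) = 0$. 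Inserting these two facts into the sine addition formula, Lemma \ref{lem7734.705} yields $|\sin((2x+1)\pi^4 q_n)| = 1$ at $x = x_n - 1/2$ and $|\sin((2x+1)\pi^4 q_n)| = |\cos 2\pi^4 q_n| \geq 1 - 2\pi^2/q_n^2$ at $x = x_n + 1/2$, the last bound coming from $\cos 2\pi^4 q_n = \cos 2\pi(p_n - \pi^3 q_n)$ together with $|p_n - \pi^3 q_n| \leq 1/q_n$. Lemma \ref{lem7734.805} then promotes this, via continuity of $x \mapsto |\sin((2x+1)\pi^4 q_n)|$ on $[x_n - 1/2, x_n + 1/2]$, to the existence of an integer $x^* \in [x_n - 1/2, x_n + 1/2]$ with $|\sin((2x^*+1)\pi^4 q_n)| \asymp 1$.

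The concluding step is to funnel the reciprocal sine through the Dirichlet kernel of Definition \ref{dfn6634.100}: for any admissible $x$ one has
\[
\frac{1}{\sin \pi^4 z} = \frac{\mathcal{D}_x(\pi^4 z)}{\sin\bigl((2x+1)\pi^4 z\bigr)}.
\]
Specialising $z = q_n$ and $x = x^*$, bounding the numerator trivially by $|\mathcal{D}_{x^*}(\pi^4 q_n)| \ll |x^*|$ (a sum of $2x^*+1$ unimodular terms) and the denominator from below by the $\asymp 1$ just obtained, and finally using $|x^*| \asymp x_n \asymp q_n$, produces $1/|\sin \pi^4 q_n| \ll q_n$, which is the theorem.

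The step I expect to be the main obstacle is the transfer from the real parameter $x_n$ — which, $\pi^3$ being irrational, is never itself an integer — to a genuine integer $x^*$ inside the unit-length window $[x_n - 1/2, x_n + 1/2]$ while keeping $|\sin((2x^*+1)\pi^4 q_n)| \asymp 1$; over such a window the argument $(2x+1)\pi^4 q_n$ sweeps through many full periods, so this lower bound is precisely the delicate content of Lemma \ref{lem7734.805}. The remaining ingredients — the trigonometric identities of Lemma \ref{lem7734.705}, the kernel factorisation, and the comparisons $x^* \asymp p_n \asymp q_n$ — are routine and mirror the $\pi^3$ case already handled in Section \ref{s6634}.
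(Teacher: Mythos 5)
Your proposal follows the paper's own proof of Theorem \ref{thm7734.304} essentially verbatim: the reduction to $z=q_n$ via Lemma \ref{lem5534.505}, the choice of $x_n$ and the appeal to Lemmas \ref{lem7734.705} and \ref{lem7734.805} for $|\sin((2x^*+1)\pi^4 q_n)|\asymp 1$, and the Dirichlet-kernel factorisation with the trivial bound $|\mathcal{D}_{x^*}|\ll|x^*|\asymp q_n$ are exactly the steps the paper takes. You also correctly identify the passage from the real point $x_n$ to an integer $x^*$ as the delicate step, which the paper likewise delegates to Lemma \ref{lem7734.805}.
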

\begin{proof} Let $\{p_n/q_n : n \geq 1\}$ be the sequence of convergents of the real number $\pi^3$. Since the denominators sequence $\{q_n : n \geq 1\}$ maximize the reciprocal sine function $1/\sin \pi^4 z$, it is sufficient to prove it for $z=q_n$. Define the associated sequence
\begin{equation} \label{eq7734.302} 
 x_n=\left (\frac{2^{2+2v_2}+1}{2^{2+2v_2}}\right )\frac{q_n}{\pi^3}  ,
\end{equation} 
where $v_2=v_2(q_n)=\max\{v:2^v\mid q_n\}$ is the $2$-adic valuation, and $n \geq 1$. Replacing the integer parameters $x^{*}\in [x_n-1/2, x_n+1/2]$, $z= q_n$, and applying Lemma \ref{lem7734.805} return 
\begin{eqnarray} \label{eq7734.303}
\left | \sin\left ((2x+1)z\right )\right | &= &\left |\sin\left ((2x^{*}+1) \pi^4 q_n\right )\right |\\
&\asymp&1 \nonumber,
\end{eqnarray}
since the sequence of convergents satisfies $\left | p_n-\pi^3q_n \right | \to 0 $ as $n \to \infty$. Rewrite the reciprocal sine function in terms of the harmonic kernel in Definition \ref{dfn6634.100}, and splice all these information together, to obtain
\begin{eqnarray} \label{eq7734.313}
 \left |\frac{1}{\sin z}\right |  &=& \left |\frac{\mathcal{D}_{x}(z)}{\sin((2x+1) z)}\right |\nonumber\\
&\ll&\left |\mathcal{D}_{x^{*}}  \right | \left |\frac{1}{\sin((2x^{*}+1) \pi^4 q_n)}\right |\\
&\ll&\left |x^{*}\right |\cdot 1\nonumber\\
&\ll&\left |z\right |\nonumber
\end{eqnarray}
since $|z|\asymp x^{*}\asymp p_n\asymp q_n$, and the trivial estimate $\left |\mathcal{D}_x(z) \right | \ll \left |x\right |$.
\end{proof}

\section{The Exponent Result For $\pi^2$ } \label{s6092}
The last estimate for irrationality exponent of the first even zeta constant $\zeta(2)=\pi^2/6$ in Table \ref{t6097.01} was derived from the algebraic properties of the cellular integral
\begin{equation} \label{eq6092.100}
A_2+B_2\zeta(2)=  \int_0^1\int_0^1 \frac{x^h(1-x)^iy^j(1-y)^k}{\left( 1-xy\right)^{i+j-l}}\frac{dx\, dy }{1-xy },
\end{equation} 
where $A_2,B_2 \in \Z$ are integers. The analysis appears in \cite{RV96}, and an expanded version of the theory of cellular integrals is presented in \cite[Section 5.3]{BF14}. These techniques also rely on rational functions approximations of $\pi^2$ and the prime number theorem. Some relevant references are \cite{RV96},  \cite{HM00}, \cite{HM93}, \cite{SV08}, and \cite{BF00} for an introduction to the rational approximations of $\pi$ and the various proofs.\\

Since $\zeta(2)$ and $\pi^2$ have the same irrationality exponent, the analysis is done for the simpler number. The proof within is based on an effective upper bound of the reciprocal sine function over the sequence $\{q_n: n\geq 1\}$ as derived in Section \ref{s6634}. 

\begin{proof} (Theorem \ref{thm6097.21}) Let $\varepsilon >0$ be an arbitrary small number, and let $\{p_n/q_n: n \geq 1\}$ be the sequence of convergents of the irrational number $\pi^2$. By Theorem \ref{thm6634.300}, the reciprocal sine function has the upper bound
\begin{equation} \label{eq6092.883}
 \left |\frac{1}{\sin \left ( \pi^3q_n \right ) }\right |\ll q_n^{1+\varepsilon} .
\end{equation} 
Moreover, $\sin \left ( \pi^3q_n \right )= \sin \left (\alpha p -\pi^3 q_n \right )$ if and only if $\alpha p=\pi p_n$, where $p_n$ is an integer. These information lead to the following relation.  
\begin{eqnarray} \label{eq6092.885}
\frac{1}{q_n^{1+\varepsilon}}&\ll&  \left |\sin \left (\pi^3q_n  \right )\right |\\
&\ll& \left |\sin \left (\pi^3q_n -\pi p_n \right )\right |\nonumber\\
&\ll& \left |\sin \left (\pi \left ( \pi^2q_n - p_n \right )\right )\right |\nonumber\\
&\ll& \left |\pi^2 q_n-p_n  \right |\nonumber
\end{eqnarray}
for all sufficiently large $p_n/q_n$. Therefore, 
\begin{eqnarray} \label{eq6092.36}
  \left | \pi^2-\frac{p_n}{q_n} \right | 
&\gg&\frac{1}{q^{2+\varepsilon} }\\
&=&\frac{1}{q^{\mu(\pi^2)+\varepsilon} }\nonumber.
\end{eqnarray}
Clearly, this implies that the irrationality measure of the real number $\pi^2$ is $\mu(\pi^2)=2$, see Definition \ref{dfn2000.01}. 
Quod erat demontrandum.
\end{proof}
\subsection{Numerical Data For The Exponent $\mu(\pi^2)$}
The continued fraction is
\begin{equation}\label{eq6092.110}
\pi^2=[9; 1, 6, 1, 2, 47, 1, 8, 1, 1, 2, 2, 1, 1, 8, 3, 1, 10, 5, 1, 3, 1, 2, 1, 1, 3, 15, \ldots ].   
\end{equation}
The sequence of convergents $\{p_n/q_n: n \geq 1\}$ is computed via the recursive formula provided in Lemma \ref{lem2000.101}. The approximation $\mu_n(\pi^2)$ of the exponent in the inequality
\begin{equation} \label{eq6092.46}
  \left | \pi^2-\frac{p_n}{q_n} \right | 
\geq\frac{1}{q^{\mu_n(\pi^2) }}
\end{equation}
are tabulated in Table \ref{t6092.05} for the early stage of the sequence of convergents $p_n/q_n \longrightarrow \pi^2$. 
\begin{table}[h!]
\centering
\caption{Numerical Data For The Exponent $\mu(\pi^2)$} \label{t6092.05}
\begin{tabular}{l|l|l| l}
$n$&$p_n$&$q_n$&$\mu_n(\pi^2)$\\
\hline
1&$9$&   $1$   &$ $\\
2&$10$&  $1$   &$ $\\
3&$69$&   $7$   &$2.253500$\\
4&$79$&  $8$   &$2.511334$\\
5&$227$&   $23$   &$3.236253$\\
6&$10748$&  $1089$   &$2.018434$\\
7&$10975$&  $1112$   &$2.321958$\\
8&$98548$&   $9985$   &$2.064841$\\
9&$109523$&  $11097$   &$2.090224$\\
10&$208071$&  $21082$   &$2.107694$\\ 
11&$525665$&      $53261$   &$2.098602$ \\ 
12&$1259401$&   $127604$   &$2.071191$\\
13&$1785066$&   $180865$   &$2.049770$\\
14&$3044467$& $308469$ & $2.172439$ \\
15&$26140802$&  $2648617$ & $2.094189$ \\ 
16&$81466873$&  $8254320$ & $2.021982$ \\
17&$107607675$&    $10902937$ & $2.147582$ \\
18&$1157543623$&  $117283690$ & $2.095357$ \\
19&$5895325790$&  $597321387$ & $2.018903$ \\
20&$7052869413$&  $714605077$ & $2.074380$ \\
21&$27053934029$&  $2741136618$ & $2.023038$ \\
22&$34106803442$&  $3455741695$ & $2.055226$ \\
23&$95267540913$&  $9652620008$ & $2.032519$ \\
24&$129374344355$&$13108361703$&$2.031079$\\
25&$224641885268$&  $22760981711$& $2.054176$\\
26&$803300000159$&   $81391306836$   &$2.110031 $\\
27&$12274141887653$&  $1243630584251$   &$2.020459 $\\
28&$13077441887812$&   $1325021891087$   &$2.030798$\\
29&$25351583775465$&  $2568652475338$   &$2.036971$\\
30&$63780609438742$&  $6462326841763$ & $2.039154$
\end{tabular}
\end{table}

\section{The Exponent Result For $\pi^3$ } \label{s6094}
The literature seems to offer no information on the irrationality exponent $\mu(\pi^3)\geq 2$ of the irrational number $\pi^3$. 

\begin{proof} (Theorem \ref{thm6097.51}) Let $\varepsilon >0$ be an arbitrary small number, and let $\{p_n/q_n: n \geq 1\}$ be the sequence of convergents of the irrational number $\pi^3$. By Theorem \ref{thm7734.304}, the reciprocal sine function has the upper bound
\begin{equation} \label{eq6094.883}
 \left |\frac{1}{\sin \left ( \pi^4q_n \right ) }\right |\ll q_n^{1+\varepsilon} .
\end{equation} 
Moreover, $\sin \left ( \pi^4q_n \right )= \sin \left (\alpha p -\pi^4 q_n \right )$ if and only if $\alpha p=\pi p_n$, where $p_n$ is an integer. These information lead to the following relation.  
\begin{eqnarray} \label{eq6094.885}
\frac{1}{q_n^{1+\varepsilon}}&\ll&  \left |\sin \left (\pi^4q_n  \right )\right |\\
&\ll& \left |\sin \left (\pi^4q_n -\pi p_n \right )\right |\nonumber\\
&\ll& \left |\sin \left (\pi \left | \pi^3q_n - p_n \right |\right )\right |\nonumber\\
&\ll& \left |\pi^3 q_n- p_n \right |\nonumber
\end{eqnarray}
for all sufficiently large $p_n/q_n$. Therefore, 
\begin{eqnarray} \label{eq6094.36}
  \left | \pi^3-\frac{p_n}{q_n} \right | 
&\gg&\frac{1}{q^{2+\varepsilon} }\\
&=&\frac{1}{q^{\mu(\pi^3)+\varepsilon} }\nonumber.
\end{eqnarray}
Clearly, this implies that the irrationality measure of the real number $\pi^3$ is $\mu(\pi^3)=2$, see Definition \ref{dfn2000.01}. 
Quod erat demontrandum.
\end{proof}

\subsection{Numerical Data For The Exponent $\mu(\pi^3)$}
The continued fraction of the second odd power of $\pi$ is
\begin{equation}\label{eq6094.110}
\pi^3=[31; 159, 3, 7, 1, 13, 2, 1, 3, 1, 12, 2, 2, 4, 34, 2, 43, 3, 1, 3, 2, ...  \ldots ].   
\end{equation}
The sequence of convergents $\{p_n/q_n: n \geq 1\}$ is computed via the recursive formula provided in Lemma \ref{lem2000.101}.
The approximation $\mu_n(\pi^3)$ of the exponent in the inequality
\begin{equation} \label{eq6094.46}
  \left | \pi^3-\frac{p_n}{q_n} \right | 
\geq\frac{1}{q^{\mu_n(\pi^3) }}
\end{equation}
are tabulated in Table \ref{t6094.05} for the early stage of the sequence of convergents $p_n/q_n \longrightarrow \pi^3$. 
\begin{table}[h!]
\centering
\caption{Numerical Data For The Exponent $\mu(\pi^3)$} \label{t6094.05}
\begin{tabular}{l|l|l| l}
$n$&$p_n$&$q_n$&$\mu_n(\pi^3)$\\
\hline
1&$31$&   $1$   &$ $\\
2&$4930$&  $159$   &$2.225255 $\\
3&$14821$&   $478$   &$2.342289$\\
4&$108677$&  $3505$   &$2.023480$\\
5&$123498$&   $3983$   &$2.320380$\\
6&$1714151$&  $55284$   &$2.096515$\\
7&$3551800$&  $114551$   &$2.047419$\\
8&$5265951$&   $169835$   &$2.126720$\\
9&$19349653$&  $624056$   &$2.022641$\\
10&$24615604$&  $793891$   &$2.189908$\\ 
11&$314736901$&      $10150748$   &$2.057364$ \\ 
12&$654089406$&   $21095387$   &$2.059538$\\
13&$1622915713$&   $52341522$   &$2.083769$\\
14&$7145752258$& $230461475$ & $2.184225$ \\
15&$244578492485$&  $7888031672$ & $2.031550$ \\ 
16&$496302737228$&  $16006524819$ & $2.160820$ \\
17&$21585596193289$&    $696168598889$ & $2.048912$ \\
18&$65253091317095$&  $2104512321486$ & $2.017121$ \\
19&$86838687510384$&  $2800680920375$ & $2.049611$ \\
20&$325769153848247$&  $10506555082611$ & $2.034434$ \\
21&$738376995206878$&  $23813791085597$ & $2.026878$ \\
22&$1064146149055125$&  $34320346168208$ & $2.020155$ \\
23&$1802523144262003$&  $58134137253805$ & $2.057247$ \\
24&$10076761870365140$&$324991032437233$&$2.020858$\\
25&$11879285014627143$&  $383125169691038$& $2.021449$\\
26&$21956046884992283$&   $708116202128271$   &$2.049213 $\\
27&$99703472554596275$&  $3215589978204122$   &$2.009654 $\\
28&$121659519439588558$&   $3923706180332393$   &$2.050107$\\
29&$708001069752539065$&  $22834120879866087$   &$2.040614$\\
30&$2953663798449744818$&  $95260189699796741$ & $2.023276$\\
\end{tabular}
\end{table}

\section{The Exponent Result For The Odd $\zeta(3)$ } \label{s6093}
The last estimate for irrationality exponent of the odd zeta constant $\zeta(3)$ was derived from the algebraic properties of the cellular integral
\begin{equation} \label{eq6093.100}
A_3+B_3\zeta(3)= \int_0^1 \int_0^1\int_0^1 \frac{x^h(1-x)^ly^sz^j(1-z)^q}{\left( 1-(1-xy)z\right)^{q+h-r}}\frac{dx\, dy \,dz}{(1-(1-xy)z },
\end{equation} 
where $A_3,B_3 \in \Z$ are integers. The analysis appears in \cite{RV01}, and an expanded version of the theory of cellular integrals is presented in \cite[Section 5.3]{BF14}.\\

\begin{table}[h!]
\centering
\caption{Historical Data For $\mu(\zeta(3)$} \label{t6093.001}
\begin{tabular}{l|l|l}
Irrationality Measure Upper Bound&Reference&Year\\
\hline
 $\mu(\zeta(3) \leq 13.41782$ & Apery, \cite{AR79}&1979\\
 $\mu(\zeta(3) \leq 7.377956$ & Hata, \cite{HM00}&2000\\
 $\mu(\zeta(3) \leq  5.513891$ & Rhin, Viola, \cite{RV01}&2001\\
\end{tabular}
\end{table}
There some relationship between the numbers $\zeta(3)$ and $\pi^3$, but is not clear if $\mu(\zeta(3))=2$. In \cite{CN19}, it was proved that $\zeta(3)=\alpha\pi^3$, where $\alpha \in \R$ is irrational. The numerical data in Table \ref{t6093.09} suggests the followings. 

\begin{conj}\label{conj6093.20} The irrationanlity exponent of the first odd zeta constant is $\mu(\zeta(3))=\mu(\alpha \pi^3)=2$, where $\alpha \ne 0$ is a unique irrational number.
\end{conj}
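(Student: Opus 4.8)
\textbf{Proof proposal (Conjecture \ref{conj6093.20}).} The plan is to transplant the argument used for $\pi^{3}$ in Theorem \ref{thm6097.51} essentially verbatim, with $\zeta(3)$ in the role of $\pi^{3}$ and $\pi\zeta(3)$ in the role of $\pi^{4}$. Two remarks make the statement well posed. First, $\zeta(3)$ is irrational (Apery, \cite{AR79}), so it has an infinite simple continued fraction with a genuine sequence of convergents $\{p_n/q_n:n\geq1\}$, and by Dirichlet's theorem $\mu(\zeta(3))\geq2$; thus the only task is the upper bound $\mu(\zeta(3))\leq2+\varepsilon$ for every $\varepsilon>0$. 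Second, $\alpha:=\zeta(3)/\pi^{3}$ is the unique real with $\zeta(3)=\alpha\pi^{3}$, and by \cite{CN19} it is nonzero and irrational, so $\mu(\zeta(3))=\mu(\alpha\pi^{3})$ is tautological and the factorization plays no role in the estimate itself.

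The engine is the effective bound
\[
\left|\frac{1}{\sin(\pi\zeta(3)q_n)}\right|\ll q_n^{\,1+\varepsilon},
\]
which is the analogue of Theorem \ref{thm7734.304} and which I would prove in three steps. (a) \emph{Reduction to convergents.} By the best approximation principle, see Lemma \ref{lem2000.07}, $|m-\zeta(3)z|\geq|p_n-\zeta(3)q_n|$ for all integers $m$ and all integers $z$ with $1\leq z\leq q_n$; exactly as in Lemma \ref{lem5534.505} this shows that the convergent denominators of $\zeta(3)$ maximize $1/|\sin(\pi\zeta(3)z)|$, so it suffices to bound the left side at $z=q_n$. (b) \emph{Shifting the pole.} Put, with $v_2=v_2(q_n)=\max\{v:2^{v}\mid q_n\}$ and of the same shape as \eqref{eq7734.302},
\[
x_n=\left(\frac{2^{2+2v_2}+1}{2^{2+2v_2}}\right)\frac{q_n}{\zeta(3)},
\]
so that $2\pi\zeta(3)q_nx_n=2\pi\left(\frac{2^{2+2v_2}+1}{2^{2+2v_2}}\right)q_n^{2}=\frac{\pi}{2}w_n$ with $w_n=(2^{2+2v_2}+1)(q_n/2^{v_2})^{2}$ an odd integer; hence $\sin(2\pi\zeta(3)q_nx_n)=\pm1$ and $\cos(2\pi\zeta(3)q_nx_n)=0$, the analogues of Lemma \ref{lem6699.705}(i)--(ii). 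Thus the endpoint $x_n-\tfrac12$ sits at a peak of $|\sin|$, and the endpoint $x_n+\tfrac12$ gives $\pm\cos(2\pi\zeta(3)q_n)=\pm\cos\bigl(2\pi(\zeta(3)q_n-p_n)\bigr)$, which tends to $\pm1$ because $|\zeta(3)q_n-p_n|<1/q_n$. (c) \emph{Kernel bound.} By Definition \ref{dfn6634.100},
\[
\frac{1}{\sin(\pi\zeta(3)q_n)}=\frac{\mathcal{D}_{x}(\pi\zeta(3)q_n)}{\sin\bigl((2x+1)\pi\zeta(3)q_n\bigr)};
\]
taking $x=x^{*}$, the integer in $[x_n-\tfrac12,x_n+\tfrac12]$, one has $x^{*}\asymp q_n$, the analogue of Lemma \ref{lem6699.805} gives $|\sin((2x^{*}+1)\pi\zeta(3)q_n)|\asymp1$, and the trivial bound $|\mathcal{D}_{x^{*}}|\ll|x^{*}|$ yields the claimed estimate.

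The conclusion then copies the end of Theorem \ref{thm6097.51}. Since $\sin(\pi\zeta(3)q_n)=\sin(\pi\zeta(3)q_n-\pi p_n)=\pm\sin\bigl(\pi(\zeta(3)q_n-p_n)\bigr)$ and $|\sin(\pi t)|\leq\pi|t|$,
\[
\frac{1}{q_n^{\,1+\varepsilon}}\ll\bigl|\sin(\pi\zeta(3)q_n)\bigr|\ll\bigl|\zeta(3)q_n-p_n\bigr|,
\]
so $\left|\zeta(3)-\tfrac{p_n}{q_n}\right|\gg q_n^{-(2+\varepsilon)}$ for all large $n$; together with $\mu(\zeta(3))\geq2$ and Definition \ref{dfn2000.01} this forces $\mu(\zeta(3))=2$, in agreement with the data of Table \ref{t6093.09}.

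The main obstacle is step (c), precisely the passage from the two endpoints of $[x_n-\tfrac12,x_n+\tfrac12]$ — where $|\sin((2x+1)\pi\zeta(3)q_n)|$ equals $1$ and is $\geq1-2\pi^{2}/q_n^{2}$ — to the interior integer $x^{*}$, since over that unit interval the argument $(2x+1)\pi\zeta(3)q_n$ runs through $\asymp\zeta(3)q_n$ full periods. This is the same delicate point that already underlies Lemma \ref{lem6699.805} in the $\pi^{k}$ theorems; nothing about $\zeta(3)$ makes it worse, but nothing makes it easier, and since no classical bound on the partial quotients of $\zeta(3)$ is available — in contrast to $\pi$ and $\pi^{2}$, for which Mahler, Salikhov, and Rhin--Viola type estimates already exist — the assertion is recorded as a conjecture rather than a theorem. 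A rigorous completion would have to replace the endpoint-to-interior continuity heuristic by an actual lower bound for $|\sin((2x^{*}+1)\pi\zeta(3)q_n)|$, exploiting the extremal property of $q_n$ established in step (a).
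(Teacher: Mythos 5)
You should first note that the paper does not prove this statement at all: Conjecture \ref{conj6093.20} is offered only with the numerical data of Table \ref{t6093.09} and the irrationality of $\zeta(3)/\pi^{3}$ from \cite{CN19} as motivation, so there is no proof in the paper to compare against, and your closing judgement that the claim should remain a conjecture is in fact the correct reading of the source. Your transplant of the $\pi^{k}$ machinery is a faithful analogue of Sections \ref{s6699}--\ref{s7729}: the reduction to convergents via Lemma \ref{lem2000.07}, the choice of $x_n$ making $w_n=(2^{2+2v_2}+1)(q_n/2^{v_2})^{2}$ odd, and the Dirichlet-kernel identity all carry over verbatim with $\zeta(3)$ in place of $\pi^{k}$, and your lower-bound remark $\mu(\zeta(3))\geq 2$ (Apery's irrationality plus Lemma \ref{lem2000.02}) together with the triviality of the uniqueness of $\alpha=\zeta(3)/\pi^{3}$ is handled correctly.

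The gap you flag in step (c) is, however, not a technicality that a sharper estimate might repair; it is fatal and, in effect, circular. On the unit interval $[x_n-\tfrac12,x_n+\tfrac12]$ the function $x\mapsto\sin\bigl((2x+1)\pi\zeta(3)q_n\bigr)$ completes on the order of $\zeta(3)q_n$ full periods and therefore vanishes roughly $2\zeta(3)q_n$ times, so the values at the two endpoints constrain the value at the interior integer $x^{*}$ in no way whatsoever; the same objection applies to Lemma \ref{lem6699.805} and hence to Theorems \ref{thm6699.300}, \ref{thm6634.300} and \ref{thm7734.304}, which is why the paper's claimed results for $\pi^{k}$ are themselves not established. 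Worse, the statement $\bigl|\sin\bigl((2x^{*}+1)\pi\zeta(3)q_n\bigr)\bigr|\asymp 1$ is equivalent to saying that the integer multiple $(2x^{*}+1)q_n\cdot\zeta(3)$, with $(2x^{*}+1)q_n\asymp q_n^{2}$, stays at bounded distance from the nearest integer, i.e. it is exactly a lower bound of the type $\|m\zeta(3)\|\gg 1$ for integers $m\asymp q_n^{2}$ --- a Diophantine assertion about $\zeta(3)$ at least as strong as the conclusion $\mu(\zeta(3))=2$ one is trying to reach. So the proposed route cannot be completed as written, and the conjecture remains open; keep in mind that the best proved bound is $\mu(\zeta(3))\leq 5.513891$ (Rhin--Viola, \cite{RV01}), so any argument yielding $2+\varepsilon$ would have to contain genuinely new input rather than a transcription of the $\pi^{k}$ sections.
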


\subsection{Numerical Data For The Exponent $\mu(\zeta(3))$}
The continued fraction of the first odd zeta constant is
\begin{equation}\label{eq6093.110}
\zeta(3)=[1, 2, 0, 2, 0, 5, 6, 9, 0, 3, 1, 5, 9, 5, 9, 4, 2, 8, 5, 3, 9, 9, 7, 3, 8, \ldots ],   
\end{equation}
listed as A002117 in OEIS. 
The sequence of convergents $\{p_n/q_n: n \geq 1\}$ is computed via the recursive formula provided in Lemma \ref{lem2000.101}.
The approximation $\mu_n(\zeta(3))$ of the exponent in the inequality
\begin{equation} \label{eq6094.46}
  \left | \pi^3-\frac{p_n}{q_n} \right | 
\geq\frac{1}{q^{\mu_n(\pi^3) }}
\end{equation}
are tabulated in Table \ref{t6093.09} for the early stage of the sequence of convergents $p_n/q_n \longrightarrow \pi^3$. 
\begin{table}[h!]
\centering
\caption{Numerical Data For The Exponent $\mu(\zeta(3))$} \label{t6093.09}
\begin{tabular}{l|l|l| l}
$n$&$p_n$&$q_n$&$\mu_n(\zeta(3))$\\
\hline
1&$1$&   $1$   &$ $\\
2&$5$&  $4$   &$2.191267 $\\
3&$6$&   $5$   &$3.843922$\\
4&$113$&  $94$   &$2.103378$\\
5&$119$&   $99$   &$2.222511$\\
6&$232$&  $193$   &$2.102718$\\
7&$351$&  $292$   &$2.302278$\\
8&$1636$&   $1361$   &$2.038931$\\
9&$1987$&  $1653$   &$2.309777$\\
10&$19519$&  $16238$   &$2.232018$\\ 
11&$177658$&      $147795$   &$2.084580$ \\ 
12&$374835$&   $311828$   &$2.057472$\\
13&$552493$&   $459623$   &$2.065833$\\
14&$927328$& $771451$ & $2.053480$ \\
15&$1479821$&  $1231074$ & $2.072380$ \\ 
16&$3886970$&  $3233599$ & $2.138006$ \\
17&$28688611$&    $23866267$ & $2.041149$ \\
18&$32575581$&  $27099866$ & $2.041133$ \\
19&$61264192$&  $50966133$ & $2.114414$ \\
20&$461424925$&  $383862797$ & $2.124760$ \\
21&$5136938367$&  $4273456900$ & $2.022499$ \\
22&$5598363292$&  $4657319697$ & $2.044823$ \\
23&$10735301659$&  $8930776597$ & $2.025155$ \\
24&$16333664951$&$13588096294$&$2.064764$\\
25&$59736296512$&  $49695065479$& $2.014150$\\
26&$76069961463$&   $63283161773$   &$2.082353$\\
27&$516156065290$&  $429394036117$   &$2.006174 $\\
28&$592226026753$&$492677197890$&$2.128367$\\
29&$18282936867880$&  $15209709972817$   &$2.007412$\\
30&$18875162894633$&  $15702387170707$ & $2.056200$\\
\end{tabular}
\end{table}

\section{The Exponent Result For $\pi^{k}$ } \label{s7729}
The method used to prove the irrationality measure $\mu(\pi^k)$ of the number $\pi^k$ is not based on rational functions approximations of $\pi^k$ and the prime number theorem. Some relevant references are \cite{RV96},  \cite{HM00}, \cite{MK53}, \cite{MM74}, \cite{CG82}, \cite{HM93}, \cite{SV08}, and \cite{BF00} for an introduction to the rational approximations of $\pi$ and the various proofs.\\

The proof is based on an effective upper bound of the reciprocal sine function over the sequence of $\{q_n: n\geq 1\}$ derived in Section \ref{s6699}. 
\begin{proof} (Theorem \ref{thm6010.02}) Let $\varepsilon >0$ be an arbitrary small number, and let $\{p_n/q_n: n \geq 1\}$ be the sequence of convergents of the irrational number $\pi^k$, with $k \geq 1$. By Theorem \ref{thm6699.300}, the reciprocal sine function has the upper bound
\begin{equation} \label{eq7729.883}
 \left |\frac{1}{\sin \left ( \pi^{k+1}q_n \right ) }\right |\ll q_n^{1+\varepsilon} .
\end{equation} 
Moreover, the relation $\sin \left ( \pi^{k+1}q_n \right )= \sin \left (\alpha p -\pi^{k+1} q_n \right )$ is true if and only if $\alpha p=\pi p_n$, where $p_n$ is an integer. These information lead to the following inequalities  
\begin{eqnarray} \label{eq7729.885}
\frac{1}{q_n^{1+\varepsilon}}&\ll&  \left |\sin \left (\pi^{k+1}q_n  \right )\right |\\
&\ll& \left |\sin \left (\pi^{k+1}q_n -\pi p_n \right )\right |\nonumber\\
&\ll& \left |\sin \left (\pi \left ( \pi^{k}q_n - p_n \right )\right )\right |\nonumber\\
&\ll& \left |\pi^{k} q_n- p_n \right |\nonumber
\end{eqnarray}
for all sufficiently large $p_n/q_n$. Therefore, 
\begin{eqnarray} \label{eq7729.36}
  \left | \pi^k-\frac{p_n}{q_n} \right | 
&\gg&\frac{1}{q^{2+\varepsilon} }\\
&=&\frac{1}{q^{\mu(\pi^{k})+\varepsilon} }\nonumber.
\end{eqnarray}
Clearly, this implies that the irrationality measure of the real number $\pi^k$ is $\mu(\pi^k)=2$, see Definition \ref{dfn2000.01}. 
Quod erat faciendum.
\end{proof}

\section{Basic Diophantine Approximations Results} \label{s2000}
All the materials covered in this section are standard results in the literature, see \cite{HW08}, \cite{LS95}, \cite{NZ91}, \cite{RH94}, \cite{SJ05}, \cite{WM00}, et alii. 

\begin{lem} \label{lem2000.101} Let $\alpha=\left [ a_0, a_1, \ldots, a_n, \ldots, \right ]$ be the continue fraction of the real number $\alpha \in \R$. Then the following properties hold.
 \begin{enumerate} [font=\normalfont, label=(\roman*)]
\item$ \displaystyle  p_n=a_np_{n-1}+p_{n-2},$ \tabto{6cm} $p_{-2}=0, \quad p_{-1}=1$, \quad for all $n\geq 0.$
\item$ \displaystyle  q_n=a_nq_{n-1}+q_{n-2},$ \tabto{6cm} $q_{-2}=1, \quad q_{-1}=0$, \quad for all $n\geq 0.$
\item$ \displaystyle  p_nq_{n-1}-p_{n-1}q_{n}=(-1)^{n-1},$ \tabto{6cm} for all $n\geq 1.$
\item$ \displaystyle  \frac{p_n}{q_{n}}=a_0+\sum_{0 \leq k < n}\frac{(-1)^{k}}{q_kq_{k+1}},$ \tabto{6cm} for all $n\geq 1.$

\end{enumerate}
\end{lem}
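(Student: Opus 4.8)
The plan is to treat the four identities in their natural logical order. Following the standard convention, I would take (i) and (ii) as the defining recurrences for the integer sequences $p_n$ and $q_n$, with the stated initial data $p_{-2}=0$, $p_{-1}=1$, $q_{-2}=1$, $q_{-1}=0$ chosen precisely so that $p_0=a_0$ and $q_0=1$ emerge at $n=0$. The substance of (i)--(ii) is then the verification that these sequences reproduce the successive truncated continued fractions $[a_0; a_1, \ldots, a_n]$; once that link is in place, the determinant identity (iii) follows from (i)--(ii) by a short induction, and the series expansion (iv) follows from (iii) by telescoping.

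To establish the recurrences I would prove the stronger auxiliary claim that for every real number $t>0$,
\[
[a_0; a_1, \ldots, a_{n-1}, t] = \frac{t\,p_{n-1}+p_{n-2}}{t\,q_{n-1}+q_{n-2}},
\]
by induction on $n$. The base case $n=1$ reads $[a_0;t]=a_0+1/t=(t p_0+p_{-1})/(t q_0+q_{-1})$, a direct check against the initial data. The inductive step rests on the self-similar identity $[a_0; \ldots, a_{n-1}, t] = [a_0; \ldots, a_{n-2}, a_{n-1}+1/t]$: substituting $s=a_{n-1}+1/t$ into the inductive hypothesis at level $n-1$ and clearing denominators recovers the displayed formula at level $n$. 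Specializing $t=a_n$ collapses the fraction to $\tfrac{a_n p_{n-1}+p_{n-2}}{a_n q_{n-1}+q_{n-2}}$, so the numerator and denominator are exactly $p_n$ and $q_n$, confirming both (i) and (ii) and identifying $p_n/q_n$ as the $n$th convergent. Equivalently, both recurrences can be packaged into the matrix identity
\[
\begin{pmatrix} p_n & p_{n-1} \\ q_n & q_{n-1} \end{pmatrix} = \begin{pmatrix} a_0 & 1 \\ 1 & 0 \end{pmatrix}\begin{pmatrix} a_1 & 1 \\ 1 & 0 \end{pmatrix}\cdots\begin{pmatrix} a_n & 1 \\ 1 & 0 \end{pmatrix},
\]
from which the two columns are read off directly.

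Granting (i) and (ii), the determinant identity (iii) follows by a one-line induction. Writing $D_n=p_n q_{n-1}-p_{n-1}q_n$ and inserting the recurrences gives $D_n=(a_n p_{n-1}+p_{n-2})q_{n-1}-p_{n-1}(a_n q_{n-1}+q_{n-2})=-(p_{n-1}q_{n-2}-p_{n-2}q_{n-1})=-D_{n-1}$, so the quantity only alternates in sign. Since $D_0=p_0 q_{-1}-p_{-1}q_0=-1$, this yields $D_n=(-1)^{n-1}$ for all $n\geq 1$. For (iv), I would divide (iii) by $q_{n-1}q_n$ to obtain the increment $\frac{p_n}{q_n}-\frac{p_{n-1}}{q_{n-1}}=\frac{(-1)^{n-1}}{q_{n-1}q_n}$, then telescope these differences starting from $p_0/q_0=a_0$ and reindex the resulting sum to arrive at $\frac{p_n}{q_n}=a_0+\sum_{0\le k<n}\frac{(-1)^k}{q_k q_{k+1}}$.

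The only genuinely delicate step is the inductive proof of the auxiliary $t$-identity, since it hinges on the reindexing move that transfers the dependence from the last partial quotient $a_n$ to the penultimate one $a_{n-1}$, and on applying the inductive hypothesis at a shifted argument rather than the original one. Once that identity is secured, parts (iii) and (iv) are purely formal algebraic consequences and present no obstacle.
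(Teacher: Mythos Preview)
Your proof is correct and is precisely the standard textbook argument. The paper itself does not supply a proof of this lemma; it is stated without proof in a section of background material prefaced by ``All the materials covered in this section are standard results in the literature,'' with citations to \cite{HW08}, \cite{LS95}, \cite{NZ91}, \cite{RH94}, \cite{SJ05}, \cite{WM00}. The argument you give --- the auxiliary $t$-identity by induction, the one-line determinant recursion $D_n=-D_{n-1}$, and the telescoping sum for (iv) --- is exactly the proof one finds in those references (e.g.\ \cite[Theorems 149--151]{HW08} or \cite[Chapter I]{KA97}), so there is nothing to compare.
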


\subsection{Rationals And Irrationals Numbers Criteria} 
A real number \(\alpha \in \mathbb{R}\) is called \textit{rational} if \(\alpha = a/b\), where \(a, b \in \mathbb{Z}\) are integers. Otherwise, the number
is \textit{irrational}. The irrational numbers are further classified as \textit{algebraic} if \(\alpha\) is the root of an irreducible polynomial \(f(x) \in
\mathbb{Z}[x]\) of degree \(\deg (f)>1\), otherwise it is \textit{transcendental}.\\

\begin{lem} \label{lem2000.01} If a real number \(\alpha \in \mathbb{R}\) is a rational number, then there exists a constant \(c = c(\alpha )\) such that
\begin{equation}
\frac{c}{q}\leq \left|  \alpha -\frac{p}{q} \right|
\end{equation}
holds for any rational fraction \(p/q \neq \alpha\). Specifically, \(c \geq  1/b\text{ if }\alpha = a/b\).
\end{lem}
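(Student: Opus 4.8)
The plan is to reduce everything to the elementary fact that a nonzero integer has absolute value at least $1$. Write $\alpha = a/b$ with $a,b \in \Z$; after replacing $(a,b)$ by $(-a,-b)$ if necessary we may assume $b \geq 1$, and likewise any competing fraction $p/q \neq \alpha$ may be taken with $q \geq 1$. First I would put the two rationals over a common denominator:
\begin{equation}
\left| \alpha - \frac{p}{q} \right| = \left| \frac{a}{b} - \frac{p}{q} \right| = \frac{\left| aq - bp \right|}{bq}.
\end{equation}

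The only substantive step is the observation that the numerator $aq - bp$ is an integer which is moreover nonzero, the latter precisely because the hypothesis $p/q \neq \alpha = a/b$ forbids $bp = aq$. Hence $\left| aq - bp \right| \geq 1$, and therefore
\begin{equation}
\left| \alpha - \frac{p}{q} \right| \geq \frac{1}{bq},
\end{equation}
so the constant $c = 1/b$, which depends only on $\alpha$, satisfies $c/q \leq \left| \alpha - p/q \right|$ for every rational $p/q \neq \alpha$; in particular one may take $c = 1/b$ as asserted.

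I do not expect any genuine obstacle: the argument is a one-line application of the common-denominator identity together with integrality. The only points deserving a word of care are the sign normalisations $b \geq 1$ and $q \geq 1$ (so that the phrase ``common denominator'' and the step ``nonzero integer $\geq 1$'' are literally valid) and the trivial remark that, since any smaller positive constant also works, the statement amounts to the claim that $c = 1/b$ is admissible. It is worth recording that this bound is exactly the rational counterpart of the irrational estimates such as Theorem \ref{thm6010.02}: a rational number is kept away from its distinct rational approximants at the crude rate $1/q$, whereas the irrational $\pi^k$ is kept away from them only at the near-quadratic rate $1/q^{2+\varepsilon}$.
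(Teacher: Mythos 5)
Your proof is correct: the common-denominator identity plus the fact that a nonzero integer has absolute value at least $1$ gives $\left|\alpha - p/q\right| \geq 1/(bq)$, which is exactly the content of the lemma with $c = 1/b$. The paper itself offers no proof of this lemma (it is quoted as a standard fact from the references), and your argument is the standard one found there, so there is nothing to reconcile.
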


This is a statement about the lack of effective or good approximations for any arbitrary rational number \(\alpha \in \mathbb{Q}\) by other rational numbers. On the other hand, irrational numbers \(\alpha \in \mathbb{R}-\mathbb{Q}\) have effective approximations by rational numbers. If the complementary inequality \(\left|  \alpha -p/q \right| <c/q\) holds for infinitely many rational approximations \(p/q\), then it already shows that the real number \(\alpha \in \mathbb{R}\) is irrational, so it is sufficient to prove the irrationality of real numbers.

\begin{lem}[Dirichlet]\label{lem2000.02} 
 Suppose $\alpha \in \mathbb{R}$ is an irrational number. Then there exists an infinite
sequence of rational numbers $p_n/q_n$ satisfying
\begin{equation}
0 < \left|  \alpha -\frac{p_n}{q_n} \right|< \frac{1}{q_n^2}
\end{equation}
for all integers \(n\in \mathbb{N}\).
\end{lem}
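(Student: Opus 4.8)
The plan is to prove Dirichlet's theorem by the classical box principle (pigeonhole) applied to the fractional parts of multiples of $\alpha$, and then to bootstrap a single good approximation into an infinite sequence using irrationality. First I would fix a large integer $N\geq 1$ and consider the $N+1$ real numbers $\{k\alpha\}=k\alpha-\lfloor k\alpha\rfloor\in[0,1)$ for $k=0,1,\ldots,N$, where $\{\cdot\}$ denotes the fractional part. Partitioning $[0,1)$ into the $N$ disjoint cells $[j/N,(j+1)/N)$, $j=0,\ldots,N-1$, and distributing $N+1$ points among $N$ cells, the pigeonhole principle forces two of them, say $\{i\alpha\}$ and $\{j\alpha\}$ with $0\leq i<j\leq N$, into a common cell, so that $\left|\{j\alpha\}-\{i\alpha\}\right|<1/N$.

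Next I would extract a rational from this collision. Setting $q=j-i$ and $p=\lfloor j\alpha\rfloor-\lfloor i\alpha\rfloor$, one checks directly that $q\alpha-p=\{j\alpha\}-\{i\alpha\}$, whence $\left|q\alpha-p\right|<1/N$. Since $1\leq q=j-i\leq N$, dividing by $q$ gives
\begin{equation}
\left|\alpha-\frac{p}{q}\right|<\frac{1}{qN}\leq\frac{1}{q^2},
\end{equation}
producing one rational $p/q$ of the required quality for each $N$. The irrationality of $\alpha$ already ensures $\left|\alpha-p/q\right|>0$, so the left-hand strict inequality $0<\left|\alpha-p/q\right|$ is automatic.

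The main obstacle is upgrading a single approximation to an \emph{infinite} sequence, and this is exactly where irrationality is indispensable (for rational $\alpha$ the conclusion fails, by Lemma~\ref{lem2000.01}). I would argue by contradiction: suppose only finitely many fractions $p_1/q_1,\ldots,p_m/q_m$ satisfy the inequality. Because $\alpha$ is irrational, each $\delta_r=\left|\alpha-p_r/q_r\right|$ is strictly positive, so $\delta=\min_r\delta_r>0$. Choosing $N$ large enough that $1/N<\delta$ and rerunning the pigeonhole step yields a fraction $p/q$ with $\left|\alpha-p/q\right|<1/(qN)\leq 1/N<\delta$; this value lies strictly closer to $\alpha$ than any listed $p_r/q_r$, hence is a genuinely new rational, yet it satisfies the inequality, contradicting completeness of the list. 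Therefore infinitely many solutions exist. Alternatively, the same conclusion follows immediately from the continued fraction apparatus of Lemma~\ref{lem2000.101}: writing $\alpha=(p_n\alpha_{n+1}+p_{n-1})/(q_n\alpha_{n+1}+q_{n-1})$ and using property (iii) gives $\left|\alpha-p_n/q_n\right|=1/\bigl(q_n(\alpha_{n+1}q_n+q_{n-1})\bigr)<1/q_n^2$, since the next complete quotient satisfies $\alpha_{n+1}>1$, while the expansion of an irrational number never terminates, so the convergents already supply the desired infinite sequence.
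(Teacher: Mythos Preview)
Your proof is correct: the pigeonhole argument produces one approximation with $|\alpha-p/q|<1/(qN)\leq 1/q^2$, and the bootstrap to infinitely many via the positive minimum gap $\delta$ is the standard way to exploit irrationality. The continued-fraction alternative is also fine, though the identity $|\alpha-p_n/q_n|=1/\bigl(q_n(\alpha_{n+1}q_n+q_{n-1})\bigr)$ is not literally among the items of Lemma~\ref{lem2000.101} and would need one line of justification from (iii).

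As for comparison with the paper: there is nothing to compare against. The paper does not prove Lemma~\ref{lem2000.02}; Section~\ref{s2000} opens by declaring that all results there are standard and pointing to \cite{HW08}, \cite{LS95}, \cite{NZ91}, \cite{RH94}, \cite{SJ05}, \cite{WM00}, and this lemma is simply stated without argument. Your write-up therefore supplies what the paper omits, and either of your two routes (pigeonhole or convergents) would be an acceptable insertion.
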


\begin{lem} \label{lem2000.03}   Let $\alpha=[a_0,a_1,a_2, \ldots]$ be the continued fraction of a real number, and let $\{p_n/q_n: n \geq 1\}$ be the sequence of convergents. Then
\begin{equation}
0 < \left|  \alpha -\frac{p_n}{q_n} \right|< \frac{1}{a_{n+1}q_n^2}
\end{equation}
for all integers \(n\in \mathbb{N}\).
\end{lem}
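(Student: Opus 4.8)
The plan is to compute the \emph{exact} value of $|\alpha-p_n/q_n|$ via the next complete quotient and then bound its denominator from below; the stated two‑sided estimate then follows at once. Write $\alpha_{n+1}=[a_{n+1};a_{n+2},a_{n+3},\ldots]$ for the next complete quotient, so that $\alpha=[a_0;a_1,\ldots,a_n,\alpha_{n+1}]$, and recall (or re‑derive by a one‑line induction from the recursions $p_n=a_np_{n-1}+p_{n-2}$, $q_n=a_nq_{n-1}+q_{n-2}$ of Lemma \ref{lem2000.101}) the representation
\begin{equation}\label{eqpp.cq}
\alpha=\frac{\alpha_{n+1}\,p_n+p_{n-1}}{\alpha_{n+1}\,q_n+q_{n-1}},
\end{equation}
obtained by substituting the real number $\alpha_{n+1}$ for the integer $a_{n+1}$ in the formula for the convergent $p_{n+1}/q_{n+1}$.

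Next I would subtract $p_n/q_n$ from \eqref{eqpp.cq}, place both terms over the common denominator $q_n(\alpha_{n+1}q_n+q_{n-1})$, and simplify the numerator using the determinant identity $p_nq_{n-1}-p_{n-1}q_n=(-1)^{n-1}$ from Lemma \ref{lem2000.101}(iii). This gives
\begin{equation}\label{eqpp.err}
\alpha-\frac{p_n}{q_n}=\frac{q_np_{n-1}-p_nq_{n-1}}{q_n\left(\alpha_{n+1}q_n+q_{n-1}\right)}=\frac{(-1)^{n}}{q_n\left(\alpha_{n+1}q_n+q_{n-1}\right)}\neq 0,
\end{equation}
which already yields the left‑hand inequality $0<|\alpha-p_n/q_n|$ together with the exact value $|\alpha-p_n/q_n|=1/\bigl(q_n(\alpha_{n+1}q_n+q_{n-1})\bigr)$.

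Finally, since $\alpha_{n+1}=a_{n+1}+1/\alpha_{n+2}>a_{n+1}$ and $q_{n-1}\geq 0$ (with $q_{n-1}\geq 1$ as soon as $n\geq 1$), the denominator satisfies $\alpha_{n+1}q_n+q_{n-1}>a_{n+1}q_n$, hence $|\alpha-p_n/q_n|<1/(a_{n+1}q_n^{2})$, which is the claimed bound. I expect no genuine obstacle here — this is a standard textbook computation and Lemma \ref{lem2000.101} already records every ingredient — the only points deserving a sentence are the justification of \eqref{eqpp.cq} and the implicit hypothesis that the continued fraction is infinite, i.e.\ that $\alpha$ is irrational, so that $a_{n+1}$ and $\alpha_{n+1}$ exist for every $n$; for rational $\alpha$ the claim is vacuous past the final partial quotient. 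As an alternative to \eqref{eqpp.cq}, one may instead use that $\alpha$ lies strictly between the consecutive convergents $p_n/q_n$ and $p_{n+1}/q_{n+1}$, so $|\alpha-p_n/q_n|<|p_{n+1}/q_{n+1}-p_n/q_n|=1/(q_nq_{n+1})$ by Lemma \ref{lem2000.101}(iii), and then $q_{n+1}=a_{n+1}q_n+q_{n-1}>a_{n+1}q_n$; this trades the short induction for the equally standard fact that $\alpha$ separates consecutive convergents.
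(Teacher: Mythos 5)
Your proof is correct: the exact error formula $\alpha-p_n/q_n=(-1)^n/\bigl(q_n(\alpha_{n+1}q_n+q_{n-1})\bigr)$ follows from the complete-quotient representation and the determinant identity of Lemma \ref{lem2000.101}(iii), and the bound $\alpha_{n+1}q_n+q_{n-1}>a_{n+1}q_n$ gives the claim, with the irrationality caveat correctly noted. The paper itself supplies no proof of this lemma --- it only cites \cite[Theorem 171]{HW08}, \cite[Corollary 3.7]{SJ05}, \cite[Theorem 9]{KA97} --- and your argument (as well as your alternative via $|\alpha-p_n/q_n|<1/(q_nq_{n+1})$ and $q_{n+1}>a_{n+1}q_n$) is exactly the standard proof found in those references, so nothing further is needed.
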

This is standard in the literature, the proof appears in \cite[Theorem 171]{HW08}, \cite[Corollary 3.7]{SJ05}, \cite[Theorem 9]{KA97}, and similar references.\\

\begin{lem} \label{lem2000.05}   Let $\alpha=[a_0,a_1,a_2, \ldots]$ be the continued fraction of a real number, and let $\{p_n/q_n: n \geq 1\}$ be the sequence of convergents. Then
\begin{multicols}{2}
 \begin{enumerate} [font=\normalfont, label=(\roman*)]
\item$ \displaystyle 
 \frac{1}{2q_{n+1}q_n} \leq \left | \alpha - \frac{p_n}{q_n}  \right | \leq \frac{1}{q_n^{2}} 
$,
\item$\displaystyle
\frac{1}{2a_{n+1}q_n^2} \leq \left | \alpha - \frac{p_n}{q_n}  \right | \leq \frac{1}{q_n^{2}} 
$,
\end{enumerate}
\end{multicols}
for all integers \(n\in \mathbb{N}\).
\end{lem}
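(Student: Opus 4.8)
The plan is to reduce both parts of the lemma to a single exact identity for the approximation error and then estimate the one quantity it involves; the upper bounds $|\alpha-p_n/q_n|\le 1/q_n^2$ are in fact already a consequence of Lemma~\ref{lem2000.03} (which even gives the sharper $1/(a_{n+1}q_n^2)$), so the real substance is the two lower bounds. First I would introduce the $(n+1)$-st \emph{complete quotient} $\alpha_{n+1}=[a_{n+1};a_{n+2},a_{n+3},\ldots]$, so that $\alpha=[a_0;a_1,\ldots,a_n,\alpha_{n+1}]$, and run the recursion of Lemma~\ref{lem2000.101}(i)--(ii) with the real number $\alpha_{n+1}$ occupying the last slot; the recursion is purely formal and does not require that entry to be an integer, so it yields the standard representation $\alpha=(\alpha_{n+1}p_n+p_{n-1})/(\alpha_{n+1}q_n+q_{n-1})$. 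Subtracting $p_n/q_n$, placing the difference over $q_n(\alpha_{n+1}q_n+q_{n-1})$, and collapsing the numerator with the determinant identity $p_nq_{n-1}-p_{n-1}q_n=(-1)^{n-1}$ of Lemma~\ref{lem2000.101}(iii), I obtain the key formula
\[
\left|\alpha-\frac{p_n}{q_n}\right|=\frac{1}{q_n\bigl(\alpha_{n+1}q_n+q_{n-1}\bigr)}.
\]

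Everything then follows by sandwiching the factor $\alpha_{n+1}q_n+q_{n-1}$ using two elementary facts: (a) $a_{n+1}<\alpha_{n+1}<a_{n+1}+1$, since $\alpha_{n+1}=a_{n+1}+1/\alpha_{n+2}$ with $\alpha_{n+2}>1$ — here irrationality of $\alpha$ guarantees the continued fraction never terminates, so the inequalities are strict; and (b) the monotonicity $q_{n-1}\le q_n\le q_{n+1}$ together with $q_{n+1}=a_{n+1}q_n+q_{n-1}$ from Lemma~\ref{lem2000.101}(ii). The common upper bound is then immediate: $\alpha_{n+1}q_n+q_{n-1}>a_{n+1}q_n+q_{n-1}=q_{n+1}\ge q_n$, so $|\alpha-p_n/q_n|<1/(q_nq_{n+1})\le 1/q_n^2$. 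For the lower bound in (i), $\alpha_{n+1}q_n+q_{n-1}<(a_{n+1}+1)q_n+q_{n-1}=q_{n+1}+q_n\le 2q_{n+1}$, whence $|\alpha-p_n/q_n|\ge 1/(2q_nq_{n+1})$. For (ii), using also $q_{n-1}\le q_n$ one gets $\alpha_{n+1}q_n+q_{n-1}<(a_{n+1}+2)q_n$, whence $|\alpha-p_n/q_n|\ge 1/((a_{n+1}+2)q_n^2)$, which is $\ge 1/(2a_{n+1}q_n^2)$ as soon as $a_{n+1}\ge 2$.

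The single point that needs genuine care — the only real obstacle in an otherwise routine argument — is the sharp numerical constant in (ii) when $a_{n+1}=1$, i.e.\ the golden-ratio-type convergents, where $(a_{n+1}+2)q_n^2=3q_n^2$ does not dominate $2a_{n+1}q_n^2=2q_n^2$; in the write-up one should either treat those indices separately (using that then $q_{n-1}$, $q_n$, $q_{n+1}$ are of comparable size and the error is of exact order $1/q_n^2$) or record the marginally weaker constant, neither of which affects any later application of the lemma. Everything else is the standard continued-fraction bookkeeping already collected in Lemma~\ref{lem2000.101}.
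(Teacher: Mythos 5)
Your derivation is the standard textbook argument, and it is essentially the one behind the sources the paper points to for this lemma (\cite[Theorem 3.8]{OC63}, \cite[Theorems 9 and 13]{KA97}); the paper itself offers no proof beyond the remark that $q_{n+1}=a_{n+1}q_n+q_{n-1}$ links the two displays. Your key identity $\left|\alpha-\frac{p_n}{q_n}\right|=\frac{1}{q_n\left(\alpha_{n+1}q_n+q_{n-1}\right)}$, obtained from Lemma \ref{lem2000.101}, together with $a_{n+1}<\alpha_{n+1}<a_{n+1}+1$, is exactly the intended mechanism, and your proofs of both upper bounds and of the lower bound in (i) are correct.

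The problem is item (ii), and it is not a matter of ``genuine care'' with the constant: with the factor $2a_{n+1}$ the stated inequality is simply false when $a_{n+1}=1$, so the first of your two proposed repairs (treating those indices separately and still recovering the stated bound) cannot succeed. Concretely, for $\alpha=\phi=[1;1,1,1,\ldots]$ every partial quotient equals $1$, and for the convergent $8/5$ one has $\left|\phi-\frac{8}{5}\right|=0.01803\ldots<\frac{1}{50}=\frac{1}{2a_{n+1}q_n^2}$; the failure is systematic, since along the Fibonacci convergents $\alpha_{n+1}+q_{n-1}/q_n\to\sqrt{5}>2$, consistent with Hurwitz's theorem that the true order there is $1/\left(\sqrt{5}\,q_n^2\right)$. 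What your computation actually establishes is $\left|\alpha-\frac{p_n}{q_n}\right|>\frac{1}{(a_{n+1}+2)q_n^2}\geq\frac{1}{2(a_{n+1}+1)q_n^2}$, which is the form the cited references prove; so your second option is the only viable one, and you should state explicitly that in (ii) the denominator $2a_{n+1}q_n^2$ must be replaced by $2(a_{n+1}+1)q_n^2$ (equivalently $(a_{n+1}+2)q_n^2$), or else restrict (ii) to indices with $a_{n+1}\geq 2$. Since this lemma is background material and is not used quantitatively elsewhere in the paper, the weakened constant affects nothing downstream.
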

The recursive relation $q_{n+1}=a_{n+1}q_n+q_{n-1}$ links the two inequalities. Confer \cite[Theorem 3.8]{OC63}, \cite[Theorems 9 and 13]{KA97}, et alia. The proof of the best rational approximation stated below, appears in \cite[Theorem 2.1]{RH94}, and \cite[Theorem 3.8]{SJ05}. 
\begin{lem} \label{lem2000.07}   Let $\alpha \in \R$ be an irrational real number, and let $\{p_n/q_n: n \geq 1\}$ be the sequence of convergents. Then, for any rational number $p/q \in \Q^{\times}$, 
\begin{multicols}{2}
 \begin{enumerate} [font=\normalfont, label=(\roman*)]
\item$ \displaystyle 
 \left | \alpha q_n - p_n  \right | \leq  \left | \alpha q -p  \right |
$,
\item$\displaystyle
 \left | \alpha - \frac{p_n}{q_n}  \right | \leq  \left | \alpha - \frac{p}{q}  \right |
$,
\end{enumerate}
\end{multicols}

for all sufficiently large \(n\in \mathbb{N}\) such that $q \leq q_n$.
\end{lem}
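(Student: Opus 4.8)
The plan is to establish the stronger best-approximation-of-the-second-kind inequality (i) directly, and then to read off (i)$\,\Rightarrow\,$(ii) in a single line. Throughout write $\delta_n = q_n\alpha - p_n$ for the signed error, so that part (i) reads $|\delta_n| \leq |q\alpha - p|$. The two ingredients I would pull from the earlier material are the determinant identity $p_nq_{n-1} - p_{n-1}q_n = (-1)^{n-1}$ of Lemma \ref{lem2000.101}(iii), and the telescoping formula of Lemma \ref{lem2000.101}(iv), which exhibits $\alpha - p_n/q_n$ as an alternating tail $\sum_{k\geq n}(-1)^k/(q_kq_{k+1})$ and hence shows that $\delta_n$ has sign $(-1)^n$; in particular $\delta_n$ and $\delta_{n-1}$ always carry opposite signs.

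First I would fix an arbitrary rational $p/q$ with $0 < q \leq q_n$ and re-express the pair $(p,q)$ in the basis furnished by two consecutive convergents. Because the integer matrix $\left(\begin{smallmatrix} p_n & p_{n-1} \\ q_n & q_{n-1}\end{smallmatrix}\right)$ has determinant $(-1)^{n-1} = \pm 1$, the system
\[
  p = u\,p_n + v\,p_{n-1}, \qquad q = u\,q_n + v\,q_{n-1}
\]
has a unique solution in integers $u, v$. Multiplying the equation for $q$ by $\alpha$ and subtracting the equation for $p$ yields the decomposition $q\alpha - p = u\,\delta_n + v\,\delta_{n-1}$, which reduces the whole lemma to a sign count.

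The crux --- and the step I expect to cost the most care --- is turning this decomposition into a lower bound with no cancellation. I would first note that same-sign choices of $(u,v)$ are impossible: if $u,v \geq 1$ then $q \geq q_n + q_{n-1} > q_n$, and if $u,v \leq -1$ then $q < 0$, both contradicting $0 < q \leq q_n$; so whenever $uv \neq 0$ the integers $u$ and $v$ have opposite signs. Combined with the fact that $\delta_n$ and $\delta_{n-1}$ have opposite signs, the two summands $u\,\delta_n$ and $v\,\delta_{n-1}$ then share a common sign, whence
\[
  |q\alpha - p| = |u|\,|\delta_n| + |v|\,|\delta_{n-1}| \geq |u|\,|\delta_n| \geq |\delta_n|,
\]
using $|u| \geq 1$. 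The degenerate cases are quick: $v = 0$ forces $q = uq_n$ with $u \geq 1$, giving $|q\alpha - p| = |u|\,|\delta_n| \geq |\delta_n|$ at once, while $u = 0$ forces $p/q = p_{n-1}/q_{n-1}$ and one invokes the classical monotonicity $|\delta_{n-1}| > |\delta_n|$ of the errors $|q_n\alpha - p_n|$. This completes (i).

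Finally, part (ii) is a mechanical consequence: since $q \leq q_n$,
\[
  \left|\alpha - \frac{p_n}{q_n}\right| = \frac{|\delta_n|}{q_n} \leq \frac{|q\alpha - p|}{q_n} \leq \frac{|q\alpha - p|}{q} = \left|\alpha - \frac{p}{q}\right|,
\]
where the first inequality is (i) and the second uses $1/q_n \leq 1/q$. The only genuinely nontrivial point in the whole argument is the opposite-signs observation that makes the two terms of the decomposition add rather than cancel; everything else is bookkeeping.
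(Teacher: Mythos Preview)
Your argument is correct and is the classical proof of the best-approximation theorem (essentially the one found in Khinchin or in the references \cite{RH94}, \cite{SJ05} that the paper points to). Note, however, that the paper itself does not supply a proof of Lemma~\ref{lem2000.07} at all: the sentence immediately preceding the lemma simply cites \cite[Theorem 2.1]{RH94} and \cite[Theorem 3.8]{SJ05}, so there is nothing in the paper to compare your write-up against beyond confirming that you have reproduced the standard textbook argument faithfully.
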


\subsection{ Irrationalities Measures }
 
The concept of measures of irrationality of real numbers is discussed in \cite[p.\ 556]{WM00}, \cite[Chapter 11]{BB87}, et alii. This concept can be approached from several points of views. 

\begin{dfn} \label{dfn2000.01} {\normalfont The irrationality measure $\mu(\alpha)$ of a real number $\alpha \in \R$ is the infimum of the subset of  real numbers $\mu(\alpha)\geq1$ for which the Diophantine inequality
\begin{equation} \label{eq597.36}
  \left | \alpha-\frac{p}{q} \right | \ll\frac{1}{q^{\mu(\alpha)} }
\end{equation}
has finitely many rational solutions $p$ and $q$. Equivalently, for any arbitrary small number $\varepsilon >0$
\begin{equation} \label{eq597.36}
  \left | \alpha-\frac{p}{q} \right | \gg\frac{1}{q^{\mu(\alpha)+\varepsilon} }
\end{equation}
for all large $q \geq 1$.
}
\end{dfn}
\begin{thm} \label{thm2000.33} {\normalfont  (\cite[Theorem 2]{BY08}) } The map $\mu : \mathbb{R} \longrightarrow [2,\infty) \cup \{1\}$ is surjective function. Any number in the set $[2, \infty) \cup \{1\}$ is the irrationality measure  of some irrational number.
\end{thm}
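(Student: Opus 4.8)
The plan is to prove surjectivity constructively: for each prescribed value in $[2,\infty)\cup\{1\}$ I would exhibit an explicit real number realizing that value as its irrationality measure. The value $1$ is immediate. By Lemma \ref{lem2000.01}, every rational number $\alpha\in\Q$ satisfies $|\alpha-p/q|\geq c/q$ for a constant $c=c(\alpha)$, so the inequality $|\alpha-p/q|\ll q^{-\mu}$ can hold for infinitely many fractions only when $\mu\leq 1$; hence $\mu(\alpha)=1$. The irrational targets $\tau\in[2,\infty)$ are the substance of the theorem, and for these the natural tool is the continued fraction expansion, so the first task is to convert the $\limsup$ of Definition \ref{dfn2000.01} into a statement about the denominators $q_n$ and partial quotients $a_n$.

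Accordingly, I would first establish the working formula
\begin{equation} \label{eqplan1}
\mu(\alpha)=2+\limsup_{n\to\infty}\frac{\ln a_{n+1}}{\ln q_n},
\end{equation}
valid for any irrational $\alpha=[a_0;a_1,a_2,\ldots]$ with convergents $p_n/q_n$. This rests on two results already in Section \ref{s2000}. The lower estimate of Lemma \ref{lem2000.05}(ii) together with the upper estimate of Lemma \ref{lem2000.03} gives $|\alpha-p_n/q_n|\asymp 1/(a_{n+1}q_n^2)$, whence $-\ln|\alpha-p_n/q_n|=\ln a_{n+1}+2\ln q_n+O(1)$; dividing by $\ln q_n$ and taking $\limsup$ yields the right-hand side of \eqref{eqplan1}. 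That the convergents alone determine the measure---that no non-convergent fraction approximates materially better---is exactly the best approximation principle of Lemma \ref{lem2000.07}: any $p/q$ with $q\leq q_n$ obeys $|\alpha q-p|\geq|\alpha q_n-p_n|$, so the optimal exponent over all rationals coincides with the optimal exponent along the convergents.

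With \eqref{eqplan1} in hand, I would construct the target by prescribing its partial quotients recursively. Fix $\tau\in[2,\infty)$ and define $\alpha=[a_0;a_1,a_2,\ldots]$ by setting, at each stage,
\begin{equation} \label{eqplan2}
a_{n+1}=\left\lceil q_n^{\,\tau-2}\right\rceil,
\end{equation}
where $q_n$ is computed from $a_0,\ldots,a_n$ through the recursion $q_{n+1}=a_{n+1}q_n+q_{n-1}$ of Lemma \ref{lem2000.101}. The prescription is legitimate since $a_{n+1}$ depends only on earlier terms, and $\alpha$ is irrational because its expansion is infinite. From \eqref{eqplan2} one has $\ln a_{n+1}=(\tau-2)\ln q_n+O(1)$, so $\ln a_{n+1}/\ln q_n\to\tau-2$ as a genuine limit, and \eqref{eqplan1} then delivers $\mu(\alpha)=2+(\tau-2)=\tau$. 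When $\tau=2$ the exponent in \eqref{eqplan2} is zero, forcing $a_{n+1}=1$ for every $n$; this recovers a noble number with geometric denominator growth and measure exactly $2$, consistent with the Dirichlet lower bound of Lemma \ref{lem2000.02}.

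The main obstacle is securing the upper bound $\mu(\alpha)\leq\tau$, equivalently ruling out approximations by \emph{non-convergent} fractions that could exceed the exponent dictated by \eqref{eqplan1}. The lower bound $\mu(\alpha)\geq\tau$ is cheap: the construction forces $|\alpha-p_n/q_n|\asymp 1/(a_{n+1}q_n^2)\asymp q_n^{-\tau}$, so for every $\mu<\tau$ the inequality $|\alpha-p/q|<q^{-\mu}$ has infinitely many convergent solutions. For the reverse inequality, Lemma \ref{lem2000.07} is indispensable: since convergents minimize $|\alpha q-p|$ over all $q\leq q_n$, any approximation with exponent exceeding $\tau$ would have to arise from a convergent, and the two-sided estimate forbids this. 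The routine but essential computation is to verify that the implied constants in $|\alpha-p_n/q_n|\asymp 1/(a_{n+1}q_n^2)$ are absolute---so the $O(1)$ in $-\ln|\alpha-p_n/q_n|=\ln a_{n+1}+2\ln q_n+O(1)$ stays bounded---and that \eqref{eqplan2} delivers $\ln a_{n+1}=(\tau-2)\ln q_n+O(1)$; together these pin the measure to $\tau$ exactly rather than to some interval $[\tau,\tau']$.
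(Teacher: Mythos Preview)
The paper supplies no proof of this theorem; it is stated with a bare citation to Bugeaud \cite{BY08}. There is thus nothing in the paper to compare against, and your proposal must be judged on its own.

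Your argument is correct and is the classical continued-fraction construction. The identity $\mu(\alpha)=2+\limsup_{n}\ln a_{n+1}/\ln q_n$ does follow from Lemmas \ref{lem2000.03}, \ref{lem2000.05}(ii), and \ref{lem2000.07} as you describe: the two-sided bound $1/(2a_{n+1}q_n^{2})\le|\alpha-p_n/q_n|<1/(a_{n+1}q_n^{2})$ has absolute implied constants, and the best-approximation principle ensures that no non-convergent fraction beats the exponent achieved along the convergents. The recursive prescription $a_{n+1}=\lceil q_n^{\tau-2}\rceil$ then forces $\ln a_{n+1}/\ln q_n\to\tau-2$, hence $\mu(\alpha)=\tau$. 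Your treatment of the value $1$ via Lemma \ref{lem2000.01} is also correct; note in passing that the paper's second sentence, claiming $1$ is the measure of some \emph{irrational} number, is a slip---only rationals have measure $1$.

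For perspective: Bugeaud's cited theorem actually establishes something sharper, namely that every $\tau\ge 2$ is realized by a real number lying in the middle-third Cantor set, which requires a more constrained construction than yours. The unrestricted surjectivity you prove is older folklore, and the continued-fraction route you take is the standard elementary argument for it.
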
 

\begin{exa} \label{ex2000.33} {\normalfont Some irrational numbers of various  irrationality measures.

\begin{enumerate} [font=\normalfont, label=(\arabic*)]
\item  A rational number has an irrationality measure of $\mu(\alpha)=1$, see \cite[Theorem 186]{HW08}.
\item   An algebraic irrational number has an irrationality measure of $\mu(\alpha)=2$, an introduction to the earlier proofs of  Roth Theorem appears in \cite[p.\ 147]{RH94}.
\item   Any irrational number has an irrationality measure of $\mu(\alpha)\geq 2$.
\item   A Champernowne number $\kappa_b=0.123 \cdots b-1\cdot b \cdot b+1 \cdot b+2\cdots$ in base $b\geq 2$, concatenation of the $b$-base integers, has an irrationality measure of $\mu(\kappa_b)=b$. For example, the decimal number
\begin{equation}
\kappa_{10}=0.1234567891011121314151617\cdots
\end{equation}
has the irrationality measure of $\mu(\kappa_{10})=10$.
\item   A Mahler number $\psi_b=\sum_{n \geq 1} b^{-[\tau]^ n}$ in base $b\geq 3$ has an irrationality measure of $\mu(\psi_b)=\tau$, for any real number $\tau \geq 2$, see \cite[Theorem 2]{BY08}. For example, the decimal number
\begin{equation}
\psi_{10}=\frac{1}{10^{3}}+\frac{1}{10^{9}}+\frac{1}{10^{27}}+\frac{1}{10^{81}}+\cdots
\end{equation}
has the irrationality measure of $\mu(\psi_{10})=3$.
\item   A Liouville number $\ell_b=\sum_{n \geq 1} b^{-n!}$ parameterized by $b \geq 2$ has an irrationality measure of $\mu(\ell_b)=\infty$, see \cite[p.\ 208]{HW08}. For example, the decimal number
\begin{equation}
\ell_{10}=\frac{1}{10}+\frac{1}{10^{2}}+\frac{1}{10^{6}}+\frac{1}{10^{24}}+\cdots
\end{equation}
has the irrationality measure of $\mu(\ell_{10})=\infty$.

\end{enumerate}
}
\end{exa}
\begin{dfn} \label{dfn2000.03} {\normalfont A measure of irrationality $\mu(\alpha)\geq 2 $ of an irrational real number $\alpha \in \R^{\times}$ is a map $\psi:\N \rightarrow \R$ such that for any $p,q \in \N$ with $q\geq q_0$, 
\begin{equation} \label{eq2000.70}
\left | \alpha - \frac{p}{q}  \right | \geq \frac{1}{\psi(q)} .
\end{equation}
Furthermore, any measure of irrationality of an irrational real number satisfies $\psi(q) \geq \sqrt{5}q^{\mu(\alpha)}\geq \sqrt{5}q^2$.

}
\end{dfn}
\begin{thm} \label{thm2000.03} For all integers $p,q\in \N$, and $q \geq q_0$, the number $\pi$ satisfies the rational approximation inequality 
\begin{equation} \label{eq2000.75}
\left | \pi - \frac{p}{q}  \right | \geq \frac{1}{q^{7.6063}} .
\end{equation}
\end{thm}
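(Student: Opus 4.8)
The proof proposal here is to appeal to Salikhov's construction, since inequality~\eqref{eq2000.75} is precisely his 2008 estimate $\mu(\pi)\le 7.6063$ from \cite{SV08}, and the present section is devoted to recording standard facts rather than establishing new ones; a full re-derivation would only reproduce that paper. For orientation I sketch the shape of the argument I would follow.

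First one constructs, for each $n\ge 1$, an integer linear form
\[
\ell_n = a_n\pi - b_n, \qquad a_n,b_n\in\Z,\ \ell_n\ne 0,\ \ell_n\to 0,
\]
arising as the $n$-th power of a carefully symmetrised rational function integrated over a suitable complex contour. The symmetry group built into the integrand is the essential point: it acts on the primes dividing the binomial-type coefficients produced by the integration and lets one cancel a positive proportion of them. Extracting the growth of the factorial factors by the prime number theorem (equivalently, Chebyshev's $\psi$-function), one obtains constants $\alpha,\beta,\gamma>0$ with
\[
\limsup_{n\to\infty}\frac{\log|a_n|}{n}=\alpha, \qquad \limsup_{n\to\infty}\frac{\log|\ell_n|}{n}=-\beta, \qquad \lim_{n\to\infty}\frac{\log d_n}{n}=\gamma,
\]
where $d_n$ is the common denominator cleared so that $d_na_n,d_nb_n\in\Z$. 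Salikhov's choice of exponents and symmetry group forces $\beta>\gamma$, which is exactly what is needed for the cleared forms $d_n\ell_n$ to still tend to $0$.

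Next I would feed these asymptotics into the classical transference step. Writing $Q_n:=d_na_n$, one has $Q_{n+1}=Q_n^{1+o(1)}$ and $|Q_n\pi-P_n|=Q_n^{-\lambda+o(1)}$ with $\lambda=(\beta-\gamma)/(\alpha+\gamma)>0$; then for any rational $p/q$ with $q$ large one selects the index $n$ with $Q_{n-1}\ll q\ll Q_n$ and compares $p/q$ against $P_n/Q_n$ (using $P_n/Q_n\ne p/q$ for $n$ large), which yields $|\pi-p/q|\gg q^{-(1+1/\lambda)}$. Inserting Salikhov's numerical values of $\alpha,\beta,\gamma$ gives $1+1/\lambda=1+\tfrac{\alpha+\gamma}{\beta-\gamma}=7.6063\ldots$, which is~\eqref{eq2000.75}.

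The hard part, and the part that occupies essentially all of \cite{SV08}, is the arithmetic optimisation: designing the symmetry group and the exponents in the integrand so that the denominator exponent $\gamma$ is small enough relative to $\beta$ to improve on the previous records, which demands sharp control of the $p$-adic valuations of the coefficients through the distribution of primes in short intervals in arithmetic progressions to small moduli. Everything else is a routine, if lengthy, saddle-point and residue computation.
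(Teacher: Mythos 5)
Your proposal matches the paper's treatment: the paper's entire proof of Theorem \ref{thm2000.03} is a citation to Salikhov's original result (\cite[Theorem 1]{SV08}), which is exactly what you do, and your sketch of the underlying linear-forms-plus-transference machinery is a faithful (if uncheckable at this level of detail) outline of that source. No discrepancy to report.
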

\begin{proof} Consult the original source \cite[Theorem 1]{SV08}. 
\end{proof}


\currfilename.\\

\end{document}